%% file: main.tex
\documentclass[12pt]{article}

\usepackage[T1]{fontenc}
\usepackage[utf8]{inputenc}
\usepackage{lmodern}
\usepackage[margin=1in]{geometry}
\usepackage{amsmath,amssymb,amsthm,mathtools}
\usepackage{booktabs}
\usepackage{microtype}
\usepackage{setspace}
\usepackage[round,authoryear]{natbib}
\usepackage[hidelinks]{hyperref}
\usepackage[nameinlink,noabbrev]{cleveref}

\input{macros}

\newtheorem{theorem}{Theorem}[section]
\newtheorem{proposition}[theorem]{Proposition}
\newtheorem{lemma}[theorem]{Lemma}
\newtheorem{corollary}[theorem]{Corollary}
\theoremstyle{definition}

\theoremstyle{remark}
\newtheorem{remark}[theorem]{Remark}

\numberwithin{equation}{section}
\allowdisplaybreaks
\title{Directional Optimal Sub-Gamma Scales for Infinitely Divisible Laws}
\author{Yichuan Chen\textsuperscript{1} and Xin Wang\textsuperscript{1,*}\\
\small \textsuperscript{1}School of Art and Design, Zhejiang Sci-Tech University,
Hangzhou 310018, China}
\date{}

\begin{document}
\maketitle

\begin{center}
\small
\textbf{Running title:} Directional Sub-Gamma Scales\\
\textbf{Corresponding author:} Xin Wang; email: wxcaup@zstu.edu.cn\\
\textbf{First author email:} Yichuan Chen; email: smallboychen@gmail.com
\end{center}

\begin{abstract}
Fixing the quadratic proxy in a sub-gamma bound at the true variance leaves a
scale to optimize, and a two-sided infinitely divisible law generally requires
different scales in the two directions. For a centered law with finite nonzero
variance, we normalize its Kolmogorov canonical measure and multiply the
resulting variable by an independent $\operatorname{Beta}(1,2)$ variable. The
signed remainder obtained in this way gives exact variational formulas for the
right and left scales. We prove that a directional scale vanishes exactly when
the L\'evy measure has no jumps in that direction, establish reflection,
scaling, convolution, L\'evy-time, and opposite-jump perturbation rules, and
recover the L\'evy triplet from the remainder law. The formulas give the two
Gamma scales for bilateral Gamma laws and, for centered Skellam laws, the exact
transition points $p_{\pm}=(2\pm\sqrt{3})/4$ between local and interior control;
when positive jumps are rare, the right scale is asymptotic to
$1/\log(1/p)$. The pair therefore records jump direction and the mechanism
that controls the variance-exact sub-gamma pole.
\end{abstract}

\noindent\textbf{Keywords:} infinitely divisible distribution; sub-gamma
inequality; optimal scale; L\'evy measure; bilateral Gamma distribution;
Skellam distribution

\medskip
\noindent\textbf{2020 Mathematics Subject Classification:} 60E07, 60E15,
60G51

\input{sections/01_introduction}
\input{sections/02_remainder}
\input{sections/03_scales}
\input{sections/04_operations}
\input{sections/05_exact_families}
\input{sections/06_skellam}
\input{sections/07_rare_jumps}
\input{sections/08_discussion}

\section*{Statements and Declarations}

\paragraph{Funding.}
The authors received no specific funding for this work.

\paragraph{Competing interests.}
The authors declare no competing interests.

\paragraph{Data availability.}
No datasets were generated or analyzed. The symbolic, numerical, and formal
verification files are supplied with the manuscript source.

\paragraph{Author contributions.}
Y.C. developed the theory, proved the results, performed the symbolic,
numerical, and formal checks, and drafted the manuscript. X.W. supervised the
work and reviewed and revised the manuscript. Both authors approved the final
version.

\paragraph{Use of generative artificial intelligence.}
OpenAI Codex (GPT-5, accessed August 2026) assisted with English translation
and the preparation of Lean formalization files. The authors verified all
mathematical statements and take responsibility for the final manuscript.

\bibliographystyle{plainnat}
\bibliography{references}

\end{document}

%% file: macros.tex
\newcommand{\E}{\mathbb{E}}
\newcommand{\Pp}{\mathbb{P}}
\newcommand{\R}{\mathbb{R}}

\newcommand{\Var}{\operatorname{Var}}
\newcommand{\Law}{\mathcal{L}}
\newcommand{\cplus}{c_{+}}
\newcommand{\cminus}{c_{-}}
\newcommand{\tplus}{\tau_{+}}
\newcommand{\tminus}{\tau_{-}}
\newcommand{\one}{\mathbf{1}}
\newcommand{\dd}{\,\mathrm{d}}
\newcommand{\pos}[1]{\left(#1\right)_{+}}

%% file: sections/01_introduction.tex
\section{Introduction}
\label{sec:introduction}

A centered random variable $X$ is right sub-gamma with variance factor $v>0$
and scale $c\geq 0$ if
\begin{equation}
  \log \E e^{tX}\leq \frac{v t^2}{2(1-ct)},
  \qquad 0\leq t<1/c,
  \label{eq:subgamma}
\end{equation}
where $1/0=\infty$. This form leads to the usual Bernstein tail bound and is a
standard tool in concentration theory \citep{boucheron2013concentration}. The
parameters in \eqref{eq:subgamma} are not intrinsic unless one of them is fixed.
Increasing either parameter preserves the inequality, so a convenient pair can
be far from sharp.

We fix $v$ at the true variance $V=\Var(X)$. This is the smallest possible
quadratic coefficient in a neighborhood of the origin. The remaining question
is global: what is the smallest compatible scale? For a nonsymmetric law, the
answer can differ after reflection, and we write the two optimal values as
$\cplus(X)$ and $\cminus(X)=\cplus(-X)$. We determine this pair for general
centered infinitely divisible laws with finite nonzero variance and describe
what it records about their L\'evy spectra.

Deviation inequalities for infinitely divisible laws are often expressed
directly through the L\'evy measure. General results of this kind include
\citet{houdre2002remarks}, concentration for norms of infinitely divisible
vectors \citep{houdre2008concentration}, and compound Poisson inequalities
\citep{kontoyiannis2006measure}. Stein and covariance representations provide a
different route to distributional identities and variance bounds
\citep{arras2019stein,barman2025covariance}. These methods yield useful controls,
but they do not by themselves determine the smallest scale in
\eqref{eq:subgamma} when its quadratic coefficient is fixed at $V$.

Optimal proxy problems have been studied for other envelopes. Examples include
optimal sub-Gaussian variance proxies for bounded and truncated laws
\citep{arbel2020strict,barreto2026optimal}. Most recently,
\citet{leskela2026subpoisson} developed optimal one-sided and two-sided
sub-Poisson variance proxies and their closure properties. Their examples
include the Skellam law, whose optimal sub-Poisson proxy is its variance. The
present problem keeps the variance fixed and optimizes the pole of a different
envelope. The distinction matters: in the Skellam family it produces a phase
transition that is absent from the sub-Poisson calculation.

We use the finite-variance canonical representation of an infinitely divisible
law \citep{sato1999levy,steutel2004infinite}. If $a$ is the
Gaussian variance and $\nu$ is the L\'evy measure, then
\begin{equation}
 K_X(t)=\log\E e^{tX}
 =\frac{a t^2}{2}+\int_{\R\setminus\{0\}}
      \bigl(e^{tx}-1-tx\bigr)\,\nu(\dd x),
 \qquad
 V=a+\int x^2\,\nu(\dd x).
 \label{eq:intro-lk}
\end{equation}
The finite measure $a\delta_0+x^2\nu(\dd x)$ is the Kolmogorov canonical
measure. Normalize it by $V$, let $Y_X$ have the resulting probability law, and
let $B\sim\operatorname{Beta}(1,2)$ be independent of $Y_X$. The signed
canonical remainder
\begin{equation}
 R_X=B Y_X
 \label{eq:intro-remainder}
\end{equation}
satisfies
\begin{equation}
 K_X(t)=\frac{Vt^2}{2}M_{R_X}(t),
 \qquad M_{R_X}(t)=\E e^{tR_X},
 \label{eq:intro-factor}
\end{equation}
with extended values allowed. The canonical representation and the elementary
Beta integral behind \eqref{eq:intro-factor} are not presented as new. A
related manuscript under review treats the spectrally positive case, where
$R_X$ is nonnegative and can be compared with an exponential law through the
M-class of \citet{klar2003characterizations} \citep{chen2026spectrally}. The
present paper does not use that manuscript as a premise: every definition and
result needed below is stated and proved here. In the present setting $R_X$ is
signed, so its two transforms must be optimized separately and the nonnegative
order argument is no longer available.

For signed remainders, the density of $R_X$ on each half-line is an integrated
tail of the corresponding part of $\nu$, and its distributional second
derivative recovers that part of the L\'evy measure. In addition,
\eqref{eq:intro-factor} gives the exact formulas
\begin{equation}
 \cplus(X)=\max\left\{0,\sup_{t>0}
   \frac{1-M_{R_X}(t)^{-1}}{t}\right\},
 \qquad
 \cminus(X)=\max\left\{0,\sup_{t>0}
   \frac{1-M_{R_X}(-t)^{-1}}{t}\right\}.
 \label{eq:intro-variational}
\end{equation}
These values have an exact spectral zero pattern:
\begin{equation}
 \cplus(X)=0\quad\Longleftrightarrow\quad \nu((0,\infty))=0,
 \qquad
 \cminus(X)=0\quad\Longleftrightarrow\quad \nu((-\infty,0))=0.
 \label{eq:intro-zero}
\end{equation}
Thus the pair distinguishes positive jumps, negative jumps, two-sided jumps,
and the purely Gaussian case.

We also show that adding an independent infinitely divisible variable with no
positive jumps cannot increase $\cplus$. Although the total variance changes,
the signed remainder of a sum is a variance-weighted mixture, which makes the
monotonicity exact. If the original scale is forced by its positive MGF
boundary, equality is preserved. Reflection gives the left counterpart.

The bilateral Gamma and Skellam families realize two different optimizing
mechanisms. For a centered bilateral Gamma law, the right and left scales are
precisely the two Gamma scale parameters. For the centered Skellam family with L\'evy measure
$\lambda[p\delta_1+(1-p)\delta_{-1}]$, the local cumulant bound controls the
right scale exactly only when
\begin{equation}
 p\geq p_+=\frac{2+\sqrt{3}}{4}.
\end{equation}
For $0<p<p_+$, the optimum is attained at a positive interior argument. The
left transition occurs at $p_-=(2-\sqrt{3})/4$. When $p\downarrow0$, any right
optimizer $t_p$ satisfies
\begin{equation}
 t_p=\log(1/p)+3\log\log(1/p)-\log 2+o(1),
 \qquad
 \cplus(p)\sim\frac{1}{\log(1/p)}.
\end{equation}
The scale therefore vanishes at $p=0$ but approaches zero only logarithmically
when positive jumps become rare.

The paper is organized as follows.
\Cref{sec:remainder} gives the signed factorization, density, and recovery
formula. \Cref{sec:scales} proves the variational characterization and its local,
boundary, and support consequences. \Cref{sec:operations} treats structural
operations and opposite-jump perturbations. \Cref{sec:families} records the
equality laws and solves the bilateral Gamma family. \Cref{sec:skellam} proves
the Skellam transition, and \Cref{sec:rare} establishes the rare-jump
asymptotics.

%% file: sections/02_remainder.tex
\section{The signed canonical remainder}
\label{sec:remainder}

Throughout, $X$ is centered, infinitely divisible, and nondegenerate, with
finite variance $V>0$. We use the representation \eqref{eq:intro-lk}. The
integrand $e^{tx}-1-tx$ is nonnegative, so $K_X(t)$ is well defined in
$[0,\infty]$ for every real $t$. At points where it is finite, it is the usual
cumulant generating function.

Define the finite canonical measure
\begin{equation}
 H_X(\dd x)=a\delta_0(\dd x)+x^2\nu(\dd x).
 \label{eq:canonical-H}
\end{equation}
Its total mass is $V$. Let $Y_X\sim H_X/V$, take
$B\sim\operatorname{Beta}(1,2)$ independently, and set $R_X=BY_X$. The density
of $B$ is $2(1-b)$ on $(0,1)$.

\begin{theorem}[Signed canonical factorization]
\label{thm:factorization}
For every real $t$,
\begin{equation}
 K_X(t)=\frac{Vt^2}{2}M_{R_X}(t),
 \label{eq:factorization}
\end{equation}
where both sides are interpreted by continuity at $t=0$ and extended values are
allowed away from zero. The law of $R_X$ has an atom of mass $a/V$ at zero. On
the two open half-lines it has the continuous densities
\begin{align}
 q_X(r)&=\frac{2}{V}\int_{(r,\infty)}(x-r)\,\nu(\dd x),
 &&r>0, \label{eq:q-positive}\\
 q_X(r)&=\frac{2}{V}\int_{(-\infty,r)}(r-x)\,\nu(\dd x),
 &&r<0. \label{eq:q-negative}
\end{align}
The pair $(V,\Law(R_X))$ determines $a$, $\nu$, and hence the centered law of
$X$. More precisely,
\begin{equation}
 \nu=\frac{V}{2}q_X''
 \label{eq:recovery}
\end{equation}
in the sense of distributions on each open half-line.
\end{theorem}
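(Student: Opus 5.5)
The argument rests on one elementary Beta-integral identity: for every real $u$,
\[
 e^{u}-1-u=u^2\int_0^1(1-b)e^{bu}\dd b=\frac{u^2}{2}\E e^{uB},
\]
which is Taylor's formula with integral remainder. Put $u=tx$. Then $e^{tx}-1-tx=\tfrac{t^2x^2}{2}\E e^{tBx}$, and this also holds at $x=0$ if we read $\E e^{tB\cdot 0}=1$. Integrating against $\nu$ and adding the Gaussian term $at^2/2=\tfrac{t^2}{2}\,a\,\E e^{tB\cdot 0}$ gives
\[
 K_X(t)=\frac{t^2}{2}\int \E e^{tBx}\,H_X(\dd x)=\frac{Vt^2}{2}\E e^{tBY_X}.
\]
The exchange of integrals is justified by Tonelli, since every integrand is nonnegative. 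The identity therefore holds in $[0,\infty]$ for each real $t\neq 0$, and at $t=0$ we use the limiting interpretation.

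Next I compute the law of $R_X$. On $\{Y_X=0\}$, which has probability $a/V$, we get $R_X=0$. If $Y_X=x\neq0$, then $R_X=Bx$. For $x>0$, $Bx$ has density $\tfrac{2}{x}(1-r/x)=\tfrac{2(x-r)}{x^2}$ on $(0,x)$. Mixing over $H_X(\dd x)/V=x^2\nu(\dd x)/V$ on $(0,\infty)$ gives, by Tonelli again,
\[
 q_X(r)=\frac{2}{V}\int_{(r,\infty)}(x-r)\,\nu(\dd x),\qquad r>0.
\]
The case $r<0$ is the same computation after reflecting $x\mapsto -x$. This proves that the densities have the form \eqref{eq:q-positive}--\eqref{eq:q-negative}. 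There are no further atoms, because $B$ has a density.

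For continuity, I note that $\int_{(r,\infty)}(x-r)\nu(\dd x)=\int_{(r,\infty)}\nu((s,\infty))\dd s$. This is finite because $\int x^2\nu<\infty$ gives $\nu((s,\infty))\le s^{-2}\int x^2\nu$, which is integrable on $[r,\infty)$ for $r>0$. Being an integral of a locally integrable function, $q_X$ is in fact absolutely continuous. It is also convex and nonincreasing on $(0,\infty)$, and its right derivative is $q_X'(r)=-\tfrac{2}{V}\nu((r,\infty))$.

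For recovery, differentiating once more in the distributional sense on $(0,\infty)$ gives $q_X''=\tfrac{2}{V}\nu$, because the distributional derivative of $r\mapsto -\nu((r,\infty))$ is $\nu|_{(0,\infty)}$. This can be checked directly against a test function $\varphi\in C_c^\infty(0,\infty)$ via Fubini: $\int q_X\varphi''=\tfrac{2}{V}\int\!\int_0^x (x-r)\varphi''(r)\dd r\,\nu(\dd x)$, and integrating by parts twice gives $\int_0^x(x-r)\varphi''(r)\dd r=\varphi(x)$, since $\varphi$ vanishes near $0$. The negative half-line is symmetric.

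Hence $(V,\Law(R_X))$ determines $\nu$ on each half-line, and $a=V\,\Pp(R_X=0)$. The centered law of $X$ is then determined by \eqref{eq:intro-lk}: $K_X$ is finite on the imaginary axis, and the characteristic function $\exp\bigl(-au^2/2+\int(e^{iux}-1-iux)\nu(\dd x)\bigr)$ is fixed by $(a,\nu)$.

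The only delicate point I expect is bookkeeping rather than a real obstacle. Extended values must be handled cleanly: Tonelli gives equality in $[0,\infty]$ for $t\neq0$ with no finiteness assumption. Also, $\Law(R_X)$ is a mixture of the law of $B\cdot Y_X$ over the mixing measure $H_X/V$, so the density formula should be derived by Tonelli rather than by differentiating a distribution function.
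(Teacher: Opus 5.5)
Your proposal is correct and follows essentially the paper's route. It uses the Beta identity (which you obtain from Taylor's integral remainder), conditions on $Y_X$, mixes the conditional density $2(x-r)/x^2$ against $x^2\nu(\dd x)/V$, and recovers $\nu$ as the second distributional derivative of the integrated tail. Your extra details are sound and fill in steps the paper leaves implicit: Tonelli for extended values, the integrated-tail argument for finiteness and continuity of $q_X$, the test-function check, and using the characteristic function rather than $K_X$ to pin down the law when exponential moments are absent.
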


\begin{proof}
For $z\neq0$, direct integration gives
\begin{equation}
 \E e^{zB}=2\int_0^1(1-b)e^{zb}\,\dd b
 =\frac{2(e^z-1-z)}{z^2}.
 \label{eq:beta-identity}
\end{equation}
The value at $z=0$ is one by continuity. Conditioning first on $Y_X$ and using
\eqref{eq:canonical-H},
\begin{align*}
 \frac{Vt^2}{2}\E e^{tBY_X}
 &=\frac{t^2}{2}\int_{\R}\E e^{tBx}\,H_X(\dd x)\\
 &=\frac{at^2}{2}+
   \int_{\R\setminus\{0\}}(e^{tx}-1-tx)\,\nu(\dd x),
\end{align*}
which proves \eqref{eq:factorization}.

If $x>0$, the conditional density of $Bx$ at $r\in(0,x)$ is
$2(x-r)/x^2$. Multiplication by the $x^2\nu(\dd x)/V$ part of the law of
$Y_X$ and integration over $x>r$ gives \eqref{eq:q-positive}. If $x<0$, the
conditional density at $r\in(x,0)$ is $2(r-x)/x^2$, which gives
\eqref{eq:q-negative}. The Gaussian part of $H_X$ yields the stated atom at
zero, and no other part produces an atom there.

It remains to justify recovery. On $(0,\infty)$, the function
$Vq_X/2$ is the integrated upper tail of $\nu$. Its first distributional
derivative is $-\nu((r,\infty))$, and its second derivative is the restriction
of $\nu$ to that half-line. The same calculation on $(-\infty,0)$ starts from
the integrated lower tail and has the same second derivative. The mass at zero
of $R_X$ gives $a/V$. Thus $V$ and $\Law(R_X)$ recover $a$ and $\nu$. In the
centered representation these determine $K_X$, so they determine the law of
$X$.
\end{proof}

\begin{remark}
The reconstruction statement is stronger than what is needed for concentration.
It shows that the signed remainder is not a lossy summary once its natural scale
$V$ is retained. The pair $(\cplus,\cminus)$ is much coarser, but its zero
pattern will still recover the support directions of $\nu$.
\end{remark}

The factorization also gives a useful mixture rule. If $X_1$ and $X_2$ are
independent variables in the standing class, with variances $V_1,V_2$, then
the canonical measure of $X_1+X_2$ is $H_{X_1}+H_{X_2}$. Therefore
\begin{equation}
 \Law(R_{X_1+X_2})
 =\frac{V_1}{V_1+V_2}\Law(R_{X_1})
  +\frac{V_2}{V_1+V_2}\Law(R_{X_2}),
 \label{eq:remainder-mixture}
\end{equation}
where the right side denotes a probability mixture. This identity will drive
the convolution and perturbation results in \Cref{sec:operations}.

%% file: sections/03_scales.tex
\section{Exact directional scales}
\label{sec:scales}

Define
\begin{align}
 \cplus(X)=\inf\Bigl\{c\geq0:\;&K_X(t)\leq
  \frac{Vt^2}{2(1-ct)}\ \text{for every }0\leq t<1/c\Bigr\},
 \label{eq:def-cplus}\\
 \cminus(X)&=\cplus(-X).
 \label{eq:def-cminus}
\end{align}
The infimum of the empty set is $\infty$. Let
\begin{equation}
 \tplus(X)=\sup\{t>0:K_X(t)<\infty\},
 \qquad
 \tminus(X)=\tplus(-X),
 \label{eq:abscissae}
\end{equation}
with values in $[0,\infty]$. We use $1/\infty=0$ and $1/0=\infty$.

\begin{theorem}[Variational characterization]
\label{thm:variational}
For the signed canonical remainder of \Cref{thm:factorization},
\begin{align}
 \cplus(X)&=\max\left\{0,\sup_{t>0}\Psi_+(t)\right\},
 &\Psi_+(t)&=\frac{1-M_{R_X}(t)^{-1}}{t},
 \label{eq:cplus-variation}\\
 \cminus(X)&=\max\left\{0,\sup_{t>0}\Psi_-(t)\right\},
 &\Psi_-(t)&=\frac{1-M_{R_X}(-t)^{-1}}{t}.
 \label{eq:cminus-variation}
\end{align}
Here $1/\infty=0$. If the value on the right is finite, it is itself a feasible
scale. Moreover,
\begin{equation}
 \cplus(X)<\infty\quad\Longleftrightarrow\quad\tplus(X)>0,
 \qquad
 \cminus(X)<\infty\quad\Longleftrightarrow\quad\tminus(X)>0.
 \label{eq:finite-iff-mgf}
\end{equation}
\end{theorem}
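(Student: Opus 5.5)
Using \Cref{thm:factorization}, I rewrite the feasibility condition in \eqref{eq:def-cplus} as a condition on $M_{R_X}$. For $c\geq0$ and $0<t<1/c$, dividing by $Vt^2/2>0$ shows that $K_X(t)\leq Vt^2/(2(1-ct))$ is equivalent to $M_{R_X}(t)\leq 1/(1-ct)$. The case $t=0$ is trivial. Since $M_{R_X}(t)>0$ and $1-ct>0$, taking reciprocals turns this into $1-ct\leq M_{R_X}(t)^{-1}$, that is, $c\geq \Psi_+(t)$. When $M_{R_X}(t)=\infty$ the convention $1/\infty=0$ makes this read $c\geq 1/t$, which is incompatible with $t<1/c$. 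So an infinite value of the MGF inside $(0,1/c)$ correctly forces infeasibility. Hence $c$ is feasible if and only if $c\geq\Psi_+(t)$ for all $t\in(0,1/c)$.

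The key step is removing the restriction $t<1/c$ so that the feasible set becomes $\{c\geq0: c\geq \sup_{t>0}\Psi_+(t)\}$. Since $M_{R_X}^{-1}\geq0$, we have $\Psi_+(t)\leq 1/t$. For $t\geq 1/c$ this gives $\Psi_+(t)\leq 1/t\leq c$ automatically. Thus the condition for $t<1/c$ is equivalent to the condition for all $t>0$. The endpoint $c=0$ (where $1/c=\infty$) needs no change. The feasible set is therefore exactly $[\max\{0,S\},\infty)$ with $S=\sup_{t>0}\Psi_+(t)$, provided this is finite. This proves \eqref{eq:cplus-variation}, shows the infimum is attained and so is itself a feasible scale, and gives \eqref{eq:cminus-variation} by applying the same argument to $-X$. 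For that step I note that $R_{-X}\overset{d}{=}-R_X$, because $H_{-X}$ is the reflection of $H_X$, so $M_{R_{-X}}(t)=M_{R_X}(-t)$.

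For \eqref{eq:finite-iff-mgf}, I bound $S$ in both directions.
\begin{itemize}
\item If $\tplus(X)=0$, then $M_{R_X}(t)=\infty$ for every $t>0$, since $K_X(t)=\infty$ there. Then $\Psi_+(t)=1/t$ and $S=\infty$.
\item Conversely, suppose $\tplus(X)>0$ and fix $t_0\in(0,\tplus)$. For $t\geq t_0$ we have $\Psi_+(t)\leq 1/t_0$. On $(0,t_0]$, $M_{R_X}$ is finite and smooth. Moreover $M_{R_X}(0)=1$ and $M_{R_X}'(0)=\E R_X$ is finite, because the MGF is finite on a neighborhood of $0$ from the right and $R_X$ has exponentially small left tail behaviour controlled by $H_X$ being finite.
\end{itemize}

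More carefully, I use the elementary bound $1-1/m\leq m-1$ for $m>0$, which gives $\Psi_+(t)\leq (M_{R_X}(t)-1)/t$. Since $|R_X|\leq|Y_X|$, I then bound $(e^{tr}-1)/t$. For $r\leq0$ it is at most $0$. For $r>0$ it is at most $re^{t_0r}$, which is integrable because $M_{R_X}(t_0')<\infty$ for some $t_0'>t_0$. Hence $\sup_{0<t\leq t_0}\Psi_+(t)<\infty$ and $S<\infty$.

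I expect this local boundedness near $t=0$ to be the only delicate point. It requires shrinking $t_0$ slightly below a point of finiteness and a dominated bound for the difference quotient. The negative part of $R_X$ only helps, since it makes $M_{R_X}(t)$ smaller.
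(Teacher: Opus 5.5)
Your proof is correct, and for the variational formulas it coincides with the paper's. Both use the same reduction through \eqref{eq:factorization} to $M_{R_X}(t)\le(1-ct)^{-1}$, and both handle an infinite MGF via $\Psi_+(t)=1/t>c$. Both also use $\Psi_+(t)\le1/t\le c$ for $t\ge1/c$ to drop the restriction $t<1/c$.

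The one step you argue differently is the bound on $\Psi_+$ near $t=0$ in \eqref{eq:finite-iff-mgf}. The paper invokes convexity of the MGF. Read as monotonicity of chord slopes from $M_{R_X}(0)=1$, convexity gives $(M_{R_X}(t)-1)/t\le(M_{R_X}(t_0)-1)/t_0$ on $(0,t_0]$. Combined with your inequality $1-1/m\le m-1$, this bounds $\Psi_+$ using finiteness at $t_0$ alone. Your mean-value bound $(e^{tr}-1)/t\le re^{t_0r}$ for $r>0$ needs a second finite point $t_0'\in(t_0,\tplus(X))$. Such a point exists because the finite domain is an interval. In exchange, your argument makes explicit the control as $t\downarrow0$ that the paper's phrase ``local boundedness'' leaves implicit; bounding $M_{R_X}$ alone would not control $\Psi_+$ there.

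You should drop the claim in your bullet that $M_{R_X}'(0)=\E R_X$ is finite because $R_X$ has a controlled left tail. Only $\int x^2\,\nu(\dd x)<\infty$ is assumed, and $\E R_X^-=\frac{1}{3V}\int_{(-\infty,0)}|x|^3\,\nu(\dd x)$ can diverge. For example, take $\nu(\dd x)=|x|^{-7/2}\dd x$ on $(-\infty,-1)$; then $\Psi_+(t)\to-\infty$ as $t\downarrow0$. Your careful argument never uses that claim: it needs only an upper bound, and the negative part of $R_X$ contributes nonpositively. So the conclusion is unaffected.
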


\begin{proof}
Fix $c\geq0$ and $0<t<1/c$. By \eqref{eq:factorization}, the sub-gamma
inequality is equivalent to
\begin{equation}
 M_{R_X}(t)\leq\frac{1}{1-ct}.
 \label{eq:M-exponential-envelope}
\end{equation}
When the left side is finite, this is equivalent to $c\geq\Psi_+(t)$. If the
left side is infinite, then $\Psi_+(t)=1/t>c$, so the same equivalence holds in
the extended sense. For $t\geq1/c$, the elementary bound
$\Psi_+(t)\leq1/t\leq c$ applies without using feasibility. Thus any feasible
$c$ must dominate zero and every $\Psi_+(t)$.

Conversely, let $c$ be the right side of \eqref{eq:cplus-variation} and suppose
it is finite. If $0<t<1/c$, then $M_{R_X}(t)$ cannot be infinite, because that
would give $\Psi_+(t)=1/t>c$. The inequality $c\geq\Psi_+(t)$ can now be
rearranged to \eqref{eq:M-exponential-envelope}. Thus $c$ is feasible and the
formula for $\cplus$ follows. Reflection proves \eqref{eq:cminus-variation}.

By \eqref{eq:factorization}, $M_{R_X}(t)<\infty$ at a nonzero $t$ exactly when
$K_X(t)<\infty$. If no positive exponential moment exists, then
$M_{R_X}(t)=\infty$ and $\Psi_+(t)=1/t$ for every $t>0$, so $\cplus=\infty$.
If $M_{R_X}(t_0)<\infty$ for some $t_0>0$, convexity of the MGF gives finiteness
and local boundedness on $[0,t_0)$. Hence $\Psi_+$ is bounded there, while
$\Psi_+(t)\leq1/t\leq1/t_0$ for $t\geq t_0$. This proves the first equivalence
in \eqref{eq:finite-iff-mgf}; the second follows by reflection.
\end{proof}

The following spectral consequence is exact and does not require higher
moments.

\begin{theorem}[Spectral zero pattern]
\label{thm:zero-pattern}
The directional scales satisfy
\begin{equation}
 \cplus(X)=0\quad\Longleftrightarrow\quad\nu((0,\infty))=0,
 \qquad
 \cminus(X)=0\quad\Longleftrightarrow\quad\nu((-\infty,0))=0.
 \label{eq:zero-pattern}
\end{equation}
Consequently, $\cplus(X)=\cminus(X)=0$ if and only if $X$ is Gaussian with
variance $V$.
\end{theorem}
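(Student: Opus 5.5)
We will use the variational formula of \Cref{thm:variational}: $\cplus(X)=0$ exactly when $\Psi_+(t)\le0$ for every $t>0$, that is, when $M_{R_X}(t)\le1$ for all $t>0$. The case $M_{R_X}(t)=\infty$ is excluded here, since then $\Psi_+(t)=1/t>0$. So the first equivalence reduces to a statement about the law of $R_X$:
\[
M_{R_X}(t)\le 1\ \text{for all } t>0 \quad\Longleftrightarrow\quad \Pp(R_X>0)=0 .
\]
By \Cref{thm:factorization}, the positive part of $R_X$ has density $q_X$ from \eqref{eq:q-positive}. This density vanishes identically on $(0,\infty)$ if and only if $\int_{(r,\infty)}(x-r)\,\nu(\dd x)=0$ for every $r>0$, which holds if and only if $\nu((0,\infty))=0$. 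Conversely, if $\nu$ charges $(0,\infty)$, then $q_X$ is strictly positive on some interval $(0,\epsilon)$. So it suffices to prove the displayed equivalence.

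\emph{($\Leftarrow$)} If $R_X\le0$ almost surely, then $e^{tR_X}\le1$ for $t>0$, so $M_{R_X}(t)\le1$ and $\Psi_+\le0$. Hence $\cplus=0$ by \eqref{eq:cplus-variation}.

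\emph{($\Rightarrow$)} Suppose $\Pp(R_X>0)>0$. If $M_{R_X}(t)=\infty$ for some $t>0$, we are done, since then $\Psi_+(t)=1/t>0$. Otherwise $M_{R_X}$ is finite for all $t>0$. The key step is to show that $M_{R_X}(t)>1$ for some $t>0$. A derivative argument at the origin is unreliable: since $R_X$ is centered only in the sense that $\E R_X=\E B\,\E Y_X=\tfrac13\int x\,H_X(\dd x)/V$, which can be negative, $M_{R_X}$ may first dip below $1$. Instead we use growth at infinity. Choose $\epsilon>0$ with $\Pp(R_X>\epsilon)=\delta>0$. Then
\[
M_{R_X}(t)\ge \delta e^{t\epsilon}\to\infty \qquad (t\to\infty),
\]
because the contribution from $\{R_X\le0\}$ is nonnegative. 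So $M_{R_X}(t)>1$ for large $t$, which gives $\Psi_+(t)>0$ and $\cplus>0$. This is the only genuine obstacle: avoiding a local (small-$t$) argument and using the large-$t$ behaviour instead.

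The statement for $\cminus$ follows by reflection. The map $X\mapsto -X$ sends $\nu$ to its reflection, and $\cminus(X)=\cplus(-X)$ by \eqref{eq:def-cminus}. For the consequence: both scales vanish if and only if $\nu=0$, which in the representation \eqref{eq:intro-lk} means $K_X(t)=at^2/2$ with $a=V$, i.e.\ $X$ is Gaussian with variance $V$. Conversely, a Gaussian has $\nu=0$.
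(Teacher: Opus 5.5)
Your proof is correct and follows the paper's argument: you use the variational formula to reduce to whether $R_X\le 0$ almost surely, show in the converse direction that $M_{R_X}(t)\to\infty$ as $t\to\infty$ when $R_X$ charges $(0,\infty)$, and conclude by reflection. The differences are cosmetic: the paper links $\nu((0,\infty))=0$ to $R_X\le 0$ directly through $Y_X\le 0$ rather than through the density $q_X$, and it invokes monotone convergence where you give the explicit bound $M_{R_X}(t)\ge\delta e^{t\epsilon}$.
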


\begin{proof}
If $\nu((0,\infty))=0$, then $Y_X\leq0$ and hence $R_X\leq0$ almost surely.
Thus $M_{R_X}(t)\leq1$ for $t>0$, so \eqref{eq:cplus-variation} gives
$\cplus=0$. Conversely, if the L\'evy measure assigns positive mass to
$(0,\infty)$, then $\Pp(R_X>0)>0$. Monotone convergence gives
$M_{R_X}(t)\to\infty$ as $t\to\infty$, possibly through infinite values. Hence
$M_{R_X}(t)>1$ for some $t$ and $\cplus>0$. This proves the first equivalence;
reflection proves the second. Both scales vanish exactly when $\nu=0$. The
centered L\'evy representation then reduces to $K_X(t)=Vt^2/2$.
\end{proof}

Two endpoint obstructions are useful below. Write $\kappa_j(X)$ for the
$j$th cumulant when it exists.

\begin{proposition}[Cumulants and endpoint behavior]
\label{prop:endpoints}
For every integer $n\geq1$ such that
$\int |x|^{n+2}\nu(\dd x)<\infty$,
\begin{equation}
 \E R_X^n=\frac{2\kappa_{n+2}(X)}{(n+1)(n+2)V}.
 \label{eq:R-moments}
\end{equation}
In particular, when the third cumulant is finite,
\begin{equation}
 \cplus(X)\geq\pos{\frac{\kappa_3}{3V}},
 \qquad
 \cminus(X)\geq\pos{-\frac{\kappa_3}{3V}}.
 \label{eq:local-bounds}
\end{equation}
The MGF boundaries give
\begin{equation}
 \cplus(X)\geq\frac{1}{\tplus(X)},
 \qquad
 \cminus(X)\geq\frac{1}{\tminus(X)}.
 \label{eq:boundary-bounds}
\end{equation}
If $\kappa_4$ is finite, then both directional objectives have the same
one-sided derivative at the origin:
\begin{equation}
 \Psi_+'(0+)=\Psi_-'(0+)
 =\frac{\kappa_4}{12V}-\frac{\kappa_3^2}{9V^2}.
 \label{eq:endpoint-derivative}
\end{equation}
\end{proposition}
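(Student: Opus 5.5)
We treat the four assertions in turn, using the product structure $R_X=BY_X$ from \Cref{thm:factorization} together with \Cref{thm:variational}.

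\emph{Moment formula.} Independence gives $\E R_X^n=\E B^n\,\E Y_X^n$. For $B\sim\operatorname{Beta}(1,2)$ the Beta integral yields
\[
\E B^n=2\int_0^1 b^n(1-b)\dd b=\frac{2}{(n+1)(n+2)}.
\]
For $n\geq1$ the Gaussian atom of $H_X$ contributes nothing to $\E Y_X^n$, so
\[
\E Y_X^n=V^{-1}\int x^{n+2}\nu(\dd x).
\]
This integral is absolutely convergent under the stated hypothesis. It equals $\kappa_{n+2}(X)$ because differentiating \eqref{eq:intro-lk} $n+2\geq3$ times at the origin kills the quadratic and linear terms. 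Differentiation under the integral is justified by dominated convergence on a neighbourhood of zero when exponential moments exist. In general, the cumulant is defined through the moment expansion of the characteristic function, and the same formula follows from the Lévy--Khintchine form with $e^{iux}$. I would state this point briefly, since the identity $\kappa_j=\int x^j\nu(\dd x)$ for $j\geq3$ is standard.

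\emph{Local bounds.} For the first bound in \eqref{eq:local-bounds}, take $n=1$, which gives $\E R_X=\kappa_3/(3V)$. If $\cplus<\infty$, then $\tplus>0$, so $M_{R_X}$ is finite near $0^+$ and $M_{R_X}(t)=1+t\E R_X+o(t)$. This expansion uses a one-sided derivative of the MGF at $0$. It is valid because $M$ is finite on $[0,t_0)$ and $\E|R_X|<\infty$: the negative part of $R_X$ has a finite mean, and the positive part has exponential moments. Hence $\Psi_+(t)\to\E R_X$ as $t\downarrow0$, and \eqref{eq:cplus-variation} gives $\cplus\geq\pos{\kappa_3/(3V)}$. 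If instead $\cplus=\infty$, the bound is trivial. The left bound follows by reflection, since $\kappa_3(-X)=-\kappa_3(X)$.

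\emph{Boundary bound.} If $\tplus=\infty$, the claim is $\cplus\geq0$, which holds. If $\tplus=0$, then $\cplus=\infty$ by \eqref{eq:finite-iff-mgf}. For $0<\tplus<\infty$ and $t>\tplus$, we have $M_{R_X}(t)=\infty$, so $\Psi_+(t)=1/t$. Letting $t\downarrow\tplus$ gives $\sup\Psi_+\geq1/\tplus$.

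\emph{Endpoint derivative.} This is the main obstacle, namely expanding $\Psi_\pm$ to second order with only $\kappa_4$ finite and no exponential moments assumed. I would work with the finite moments: $\E R_X^2$ is finite because $\int|x|^4\nu(\dd x)<\infty$. Write $\mu_1=\E R_X$ and $\mu_2=\E R_X^2$. The plan is to justify
\[
M_{R_X}(\pm t)=1\pm t\mu_1+\tfrac{t^2}{2}\mu_2+o(t^2)
\]
as $t\downarrow0$ whenever the left side is finite near $0$. Then
\[
\Psi_\pm(t)=\pm\mu_1+t\Bigl(\tfrac{\mu_2}{2}-\mu_1^2\Bigr)+o(t),
\]
and with $\mu_1=\kappa_3/(3V)$ and $\mu_2=\kappa_4/(6V)$ this gives $\kappa_4/(12V)-\kappa_3^2/(9V^2)$ in both directions. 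The sign symmetry comes from $\mu_1^2$ and $\mu_2$ being even in the direction of $t$.

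The delicate point is the meaning of $\Psi_\pm'(0+)$ when $\tplus=0$. In that case $M_{R_X}(t)=\infty$ and $\Psi_+(t)=1/t$, so no derivative exists. I would therefore state the derivative claim under the assumption that the relevant transform is finite near $0^+$, and interpret $\Psi_\pm(0)$ as $\pm\mu_1$. Under that assumption the expansion holds: $M$ is analytic-type on $(0,t_0)$ and $C^2$ up to $0$ by dominated convergence, using $x^2e^{tx}\leq x^2+x^2e^{t_0x}$ on $x>0$ and the moment bound on $x<0$.
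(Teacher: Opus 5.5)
Your proposal is correct and follows essentially the paper's route: the Beta moment factorization $\E R_X^n=\E B^n\,\E Y_X^n$, the first- and second-order expansions of the remainder MGF at the origin, and the infinite-MGF obstruction beyond $\tau_+$ (you phrase this through $\Psi_+(t)=1/t$ rather than directly from the definition of $\cplus$). Your added care is well placed, especially the observation that $\Psi_\pm'(0+)$ only makes sense when the corresponding transform is finite near $0^\pm$ (if $\tau_+=0$ then $\Psi_+(t)=1/t$); the paper's expansion assumes this tacitly, and in the dominated-convergence step you should take the dominating function at some $t_1<t_0$ rather than at $t_0$ itself.
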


\begin{proof}
Independence and the density of $B$ give
\begin{equation*}
 \E B^n=2\int_0^1b^n(1-b)\,\dd b
 =\frac{2}{(n+1)(n+2)}.
\end{equation*}
The nonzero part of the law of $Y_X$ then yields
\begin{equation*}
 \E R_X^n=\E B^n\frac{1}{V}\int x^{n+2}\nu(\dd x).
\end{equation*}
For $n+2\geq3$, the last integral is $\kappa_{n+2}(X)$, which proves
\eqref{eq:R-moments}.

Let $m_1=\E R_X=\kappa_3/(3V)$. Expanding the MGF at zero gives
$\Psi_+(t)\to m_1$ and $\Psi_-(t)\to-m_1$. The variational formulas imply
\eqref{eq:local-bounds}. If $c<1/\tplus$, then the interval $(0,1/c)$ extends
beyond the positive MGF domain, where the left side of \eqref{eq:subgamma} is
infinite and the right side is finite. This proves the first boundary bound;
reflection gives the second.

Finally, set $m_2=\E R_X^2=\kappa_4/(6V)$. From
\begin{equation*}
 M_{R_X}(t)=1+m_1t+\frac{m_2t^2}{2}+o(t^2)
\end{equation*}
we obtain
\begin{equation*}
 \Psi_+(t)=m_1+\left(\frac{m_2}{2}-m_1^2\right)t+o(t).
\end{equation*}
Replacing $t$ by $-t$ changes $m_1$ to $-m_1$ but leaves the coefficient of
$t$ unchanged. Substitution of $m_1$ and $m_2$ gives
\eqref{eq:endpoint-derivative}.
\end{proof}

The derivative separates local from interior control. We say that the right
scale is locally controlled when the supremum in \eqref{eq:cplus-variation} is
given by its limit at zero, after taking the maximum with zero. Boundary and
interior control have the analogous meanings.

\begin{corollary}[An interior-control criterion]
\label{cor:interior}
Suppose that the L\'evy measure is supported in a bounded interval and charges
both half-lines. If
\begin{equation}
 3V\kappa_4>4\kappa_3^2,
 \label{eq:interior-criterion}
\end{equation}
then both directional scales are attained at positive finite arguments. No
uniqueness is asserted.
\end{corollary}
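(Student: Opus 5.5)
Under the hypotheses of \Cref{cor:interior}, the supremum in \eqref{eq:cplus-variation} is attained at some $t\in(0,\infty)$ because $\Psi_+$ is continuous there, tends to a finite limit at $0^+$, strictly increases just to the right of $0$, and decays to $0$ at infinity; reflection then handles $\Psi_-$.

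\emph{Regularity.} Since $\nu$ is supported in a bounded interval $[-L,L]$, the variable $Y_X$ is supported in $[-L,L]$, so $R_X=BY_X$ is bounded. Hence $M_{R_X}$ is finite, positive and real-analytic on all of $\R$, and $\Psi_+(t)=(1-M_{R_X}(t)^{-1})/t$ is continuous on $(0,\infty)$. All cumulants of $X$ are finite, so \Cref{prop:endpoints} applies:
\[
\Psi_+(0+)=m_1=\frac{\kappa_3}{3V},
\qquad
\Psi_+'(0+)=\frac{\kappa_4}{12V}-\frac{\kappa_3^2}{9V^2}.
\]
This derivative is strictly positive exactly when $3V\kappa_4>4\kappa_3^2$, which is \eqref{eq:interior-criterion}. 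Therefore $\Psi_+(t)>\Psi_+(0+)$ for all small $t>0$, so the supremum over $t>0$ is not the boundary value at zero.

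\emph{Behaviour at infinity.} We have $\Psi_+(t)\le 1/t\to0$. Because $\nu$ charges $(0,\infty)$, \Cref{thm:zero-pattern} gives $\cplus(X)>0$. Concretely, $\Pp(R_X>0)>0$ forces $M_{R_X}(t)\to\infty$, so $\Psi_+(t)>0$ for some $t$. Thus $\sup_{t>0}\Psi_+>0$, and the maximum with $0$ in \eqref{eq:cplus-variation} is inactive. The boundary bound \eqref{eq:boundary-bounds} is vacuous here, since $\tplus=\infty$.

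\emph{Attainment.} Let $s=\sup_{t>0}\Psi_+(t)$. We have $s>\max\{0,\Psi_+(0+)\}$: the first inequality comes from the behaviour at infinity, the second from the positive derivative at the origin. Choose $\delta>0$ with $\Psi_+(t)<s/2$ for $t<\delta$ small enough that $\Psi_+$ is near $\Psi_+(0+)<s$ there, and choose $T$ with $1/T<s/2$. Then the supremum equals the supremum over the compact interval $[\delta,T]$. On that interval $\Psi_+$ is continuous, so the supremum is attained at some $t^*\in[\delta,T]\subset(0,\infty)$.

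\emph{Left scale.} Apply the same argument to $-X$. Its third cumulant is $-\kappa_3$, its fourth cumulant is $\kappa_4$, and its variance is $V$, so \eqref{eq:interior-criterion} is unchanged. Its Lévy measure is still bounded and still charges $(0,\infty)$, because $\nu$ charges $(-\infty,0)$. Consistently, \eqref{eq:endpoint-derivative} already states that $\Psi_-'(0+)=\Psi_+'(0+)$.

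\emph{Main obstacle.} The only delicate point is the bookkeeping with the limit at zero. One must confirm that the right derivative at $0^+$ is strictly positive, so that near zero $\Psi_+$ actually exceeds its limit value $\Psi_+(0+)$, which itself may be negative. Any small-$t$ sequence then has values strictly below $s$, so the supremum cannot escape to $0$. This uses the Taylor expansion of \Cref{prop:endpoints}, which is valid because $M_{R_X}$ is analytic at $0$. Nothing here requires uniqueness of the maximizer.
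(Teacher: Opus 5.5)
Your argument is correct and is essentially the paper's proof: the positive endpoint derivative from \Cref{prop:endpoints} rules out control at the origin, bounded support makes the remainder MGF entire so $\Psi_\pm$ is continuous, the zero pattern makes the supremum positive, and decay at infinity confines the supremum to a compact interval; you simply spell out the compactness step in more detail than the paper does. One small slip in your attainment step: near the origin you should bound $\Psi_+$ by $\tfrac12\bigl(s+\Psi_+(0+)\bigr)<s$ rather than by $s/2$, since $\Psi_+(0+)$ can exceed $s/2$ (for example, the Skellam law with $p=0.75$).
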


\begin{proof}
Condition \eqref{eq:interior-criterion} makes the derivative in
\eqref{eq:endpoint-derivative} positive, so neither objective is maximized at
zero. Bounded L\'evy support makes both MGFs entire. Since both jump directions
are present, \Cref{thm:zero-pattern} gives positive scales. Moreover,
$\Psi_+(t)\to0$ and $\Psi_-(t)\to0$ as $t\to\infty$: in each direction the
corresponding MGF tends to infinity, while the prefactor $1/t$ tends to zero.
Continuity on $(0,\infty)$ now forces each positive supremum to be attained in
the interior.
\end{proof}

%% file: sections/04_operations.tex
\section{Structural operations and directional perturbations}
\label{sec:operations}

The canonical remainder turns the basic operations into scaling and mixture
identities. The statements below include extended values when an exponential
moment is absent.

\begin{proposition}[Reflection, scaling, and L\'evy time]
\label{prop:basic-operations}
For $b>0$ and $s>0$,
\begin{align}
 \cplus(-X)&=\cminus(X), & \cminus(-X)&=\cplus(X),
 \label{eq:reflection-scales}\\
 \cplus(bX)&=b\cplus(X), & \cminus(bX)&=b\cminus(X).
 \label{eq:scaling-scales}
\end{align}
If $X_s$ is the time-$s$ marginal of a L\'evy process whose time-one marginal
is $X$, then
\begin{equation}
 \cplus(X_s)=\cplus(X),
 \qquad
 \cminus(X_s)=\cminus(X).
 \label{eq:levy-time}
\end{equation}
\end{proposition}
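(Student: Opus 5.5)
The plan is to reduce each identity to a statement about the law of the signed remainder $R_X$, and then read it off from the variational formulas of \Cref{thm:variational}. Those formulas show that $\cplus(X)$ and $\cminus(X)$ depend on $X$ only through the function $t\mapsto M_{R_X}(t)$ on the real line. The key is therefore to track how $\Law(R_X)$ changes under each operation. The direct definition \eqref{eq:def-cminus} is used only for the reflection identities.

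\emph{Reflection.} The first identity $\cplus(-X)=\cminus(X)$ is the definition \eqref{eq:def-cminus}. For the second, $\cminus(-X)=\cplus(-(-X))=\cplus(X)$. As a consistency check, I would also note that $-X$ has Lévy measure $\nu\circ(x\mapsto-x)$ and the same $a$ and $V$. Hence $R_{-X}\overset{d}{=}-R_X$, and $\Psi_\pm$ for $-X$ are $\Psi_\mp$ for $X$.

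\emph{Scaling.} For $b>0$, the variable $bX$ is centered and infinitely divisible with Gaussian variance $b^2a$ and Lévy measure $\nu_b=\nu\circ(x\mapsto x/b)$, i.e. the image of $\nu$ under $x\mapsto bx$. Its variance is $b^2V$. The canonical measure of $bX$ is then the image of $b^2H_X$ under $x\mapsto bx$. After normalization, $Y_{bX}\overset{d}{=}bY_X$, and therefore $R_{bX}\overset{d}{=}bR_X$. (Equivalently, $K_{bX}(t)=K_X(bt)$ and \eqref{eq:factorization} together give $M_{R_{bX}}(t)=M_{R_X}(bt)$; both sides take the value $\infty$ simultaneously.) Substituting this into \eqref{eq:cplus-variation} gives
\[
\Psi_+^{bX}(t)=\frac{1-M_{R_X}(bt)^{-1}}{t}=b\,\Psi_+^{X}(bt).
\]
Taking the supremum over $t>0$ and then the maximum with $0$ yields $\cplus(bX)=b\cplus(X)$, including the case where the value is $\infty$. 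The left identity follows in the same way from \eqref{eq:cminus-variation}, or from reflection.

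\emph{Lévy time.} The time-$s$ marginal $X_s$ has triplet $(sa,s\nu)$ and variance $sV$, so $K_{X_s}=sK_X$. Its canonical measure is $sH_X$, and normalizing it by $sV$ gives the same probability law as $H_X/V$. Hence $R_{X_s}\overset{d}{=}R_X$, the functions $\Psi_\pm$ coincide, and \eqref{eq:levy-time} follows. For rational $s$ this is also an instance of the mixture rule \eqref{eq:remainder-mixture}, but the direct argument covers all $s>0$.

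The only step needing care is checking that the extended-value conventions are respected when $M_{R_X}$ is infinite. Here $M=\infty$ gives $\Psi_+(t)=1/t$, and the identity $M_{R_{bX}}(t)=M_{R_X}(bt)$ holds in $[0,\infty]$. The scaling relation for $\Psi_+$ therefore persists, since in that case $1/t=b\cdot 1/(bt)$.
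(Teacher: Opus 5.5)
Your proposal is correct and follows essentially the same route as the paper: track how the law of the signed remainder transforms ($R_{-X}\overset{d}{=}-R_X$, $R_{bX}\overset{d}{=}bR_X$, $R_{X_s}\overset{d}{=}R_X$) and substitute into the variational formulas of \Cref{thm:variational}. The only difference is cosmetic: you obtain the reflection identities directly from the definition $\cminus(X)=\cplus(-X)$, which is more immediate than the paper's appeal to $R_{-X}\overset{d}{=}-R_X$.
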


\begin{proof}
Reflection sends $R_X$ to $-R_X$, which proves \eqref{eq:reflection-scales}.
Positive scaling sends the canonical measure to the pushforward under
$x\mapsto bx$, after multiplication by $b^2$, and sends $V$ to $b^2V$.
Therefore $R_{bX}$ has the law of $bR_X$. Substitution in
\eqref{eq:cplus-variation} gives \eqref{eq:scaling-scales}. At L\'evy time $s$,
both the canonical measure and its total mass are multiplied by $s$. The
normalized law of $Y_X$, hence that of $R_X$, is unchanged. This proves
\eqref{eq:levy-time}.
\end{proof}

\begin{proposition}[Independent sums]
\label{prop:convolution}
If $X_1$ and $X_2$ are independent centered infinitely divisible variables with
finite positive variances, then
\begin{equation}
 \cplus(X_1+X_2)\leq\max\{\cplus(X_1),\cplus(X_2)\},
 \qquad
 \cminus(X_1+X_2)\leq\max\{\cminus(X_1),\cminus(X_2)\}.
 \label{eq:convolution-scales}
\end{equation}
\end{proposition}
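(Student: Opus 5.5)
The plan is to work entirely through the mixture identity \eqref{eq:remainder-mixture} and the envelope form \eqref{eq:M-exponential-envelope} from the proof of \Cref{thm:variational}. It suffices to prove the right inequality; the left one follows by applying it to $-X_1,-X_2$ and using \eqref{eq:reflection-scales}, since $-(X_1+X_2)=(-X_1)+(-X_2)$ is again a sum of independent variables in the standing class.

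Set $c=\max\{\cplus(X_1),\cplus(X_2)\}$. If $c=\infty$ there is nothing to prove, so assume $c<\infty$. By \Cref{thm:variational}, each $\cplus(X_i)$ is itself a feasible scale. Monotonicity in the scale then shows that $c$ is feasible for both $X_1$ and $X_2$: increasing $c$ shrinks the interval $[0,1/c)$ and enlarges the envelope. Using the equivalence between \eqref{eq:subgamma} with $v=V$ and \eqref{eq:M-exponential-envelope} via \eqref{eq:factorization}, this means
\begin{equation*}
 M_{R_{X_i}}(t)\leq\frac{1}{1-ct},\qquad 0<t<1/c,\quad i=1,2,
\end{equation*}
with both left sides finite on this range.

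Next, write $V=V_1+V_2$, which is the variance of $X_1+X_2$ because the canonical measures add. Taking Laplace transforms in \eqref{eq:remainder-mixture} gives, for every real $t$ (extended values allowed, since all terms are nonnegative),
\begin{equation*}
 M_{R_{X_1+X_2}}(t)=\frac{V_1}{V}M_{R_{X_1}}(t)+\frac{V_2}{V}M_{R_{X_2}}(t).
\end{equation*}
For $0<t<1/c$ this convex combination is at most $1/(1-ct)$. By the same equivalence, applied now to $X_1+X_2$ with its own variance $V$, the scale $c$ is feasible for $X_1+X_2$. Hence $\cplus(X_1+X_2)\leq c$.

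The only delicate point, rather than a real obstacle, is the bookkeeping of the quadratic coefficient. The sub-gamma inequality for the sum is required with its own total variance $V$, not with $V_1$ or $V_2$. The factorization absorbs exactly this: it turns the condition into a variance-free envelope on $M_R$, and the envelope is preserved under mixtures. I would also briefly note two conventions. The degenerate case $c=0$ is covered because $1/c=\infty$. The range $t\geq 1/c$ imposes no condition, so no convexity of $K$ and no information about the MGF domain of the sum beyond $[0,1/c)$ is needed.
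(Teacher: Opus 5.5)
Your proof is correct and follows the paper's own argument. You bound each remainder MGF by $(1-ct)^{-1}$ on $0<t<1/c$, pass this bound through the variance-weighted mixture identity to the sum, and obtain the left inequality by reflection. Your extra bookkeeping only makes explicit what the paper's short proof leaves implicit: that the infimum is itself feasible, that feasibility is monotone in $c$, and how the $c=\infty$ and $c=0$ conventions are handled.
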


\begin{proof}
Let $c=\max\{\cplus(X_1),\cplus(X_2)\}$. For $0<t<1/c$, each remainder MGF
is bounded by $(1-ct)^{-1}$. The mixture identity
\eqref{eq:remainder-mixture} gives the same bound for
$M_{R_{X_1+X_2}}(t)$. Hence $c$ is feasible for the sum. Reflection gives the
left inequality.
\end{proof}

The maximum bound is standard for sub-gamma sums, but the signed remainder also
gives a sharper monotonicity when the added jumps point against the direction
being controlled.

\begin{theorem}[Opposite-jump monotonicity]
\label{thm:opposite-jumps}
Let $X$ and $Z$ be independent variables in the standing class.

\begin{enumerate}
\item If the L\'evy measure of $Z$ has no positive jumps, then
\begin{equation}
 \cplus(X+Z)\leq\cplus(X).
 \label{eq:right-monotonicity}
\end{equation}
\item If the L\'evy measure of $Z$ has no negative jumps, then
\begin{equation}
 \cminus(X+Z)\leq\cminus(X).
 \label{eq:left-monotonicity}
\end{equation}
\end{enumerate}
Suppose in addition that $0<\tplus(X)<\infty$ and
$\cplus(X)=1/\tplus(X)$. Under the assumption in part 1,
\begin{equation}
 \cplus(X+Z)=\cplus(X).
 \label{eq:boundary-robustness}
\end{equation}
The reflected boundary statement holds on the left.
\end{theorem}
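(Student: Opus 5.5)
The plan is to work entirely through the mixture identity \eqref{eq:remainder-mixture} and the variational formula of \Cref{thm:variational}. For part 1, if $Z$ has no positive jumps then $Y_Z\le 0$, so $R_Z\le0$ almost surely and $M_{R_Z}(t)\le 1$ for every $t>0$, as in the proof of \Cref{thm:zero-pattern}. Writing $w=V_X/(V_X+V_Z)\in(0,1)$, the mixture identity gives
\begin{equation*}
 M_{R_{X+Z}}(t)=w\,M_{R_X}(t)+(1-w)\,M_{R_Z}(t),\qquad t>0,
\end{equation*}
in the extended sense. If $\cplus(X)=\infty$ there is nothing to prove. Otherwise set $c=\cplus(X)$, which is feasible by \Cref{thm:variational}, so $M_{R_X}(t)\le(1-ct)^{-1}$ for $0<t<1/c$. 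Since $1\le (1-ct)^{-1}$ on that range, the mixture is a convex combination of two quantities each at most $(1-ct)^{-1}$. Hence $c$ is feasible for $X+Z$ through the equivalence \eqref{eq:M-exponential-envelope}, and \eqref{eq:right-monotonicity} follows. Part 2 then follows by applying part 1 to $-X$ and $-Z$ and using \eqref{eq:reflection-scales}.

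For the boundary robustness, I will combine the upper bound just obtained with the lower bound \eqref{eq:boundary-bounds} applied to $X+Z$. The key point is that $\tplus(X+Z)\le\tplus(X)$. This holds because $K_{X+Z}=K_X+K_Z$ with $K_Z\ge0$ (its integrand is nonnegative). Alternatively it follows from the mixture, since $w>0$ and $M_{R_{X+Z}}(t)\ge wM_{R_X}(t)$, so $M_{R_{X+Z}}(t)=\infty$ whenever $M_{R_X}(t)=\infty$ for $t>\tplus(X)$. Then
\begin{equation*}
 \cplus(X+Z)\ge \frac{1}{\tplus(X+Z)}\ge\frac{1}{\tplus(X)}=\cplus(X)\ge\cplus(X+Z),
\end{equation*}
which gives \eqref{eq:boundary-robustness}. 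The reflected statement comes from the same argument applied to $-X$ and $-Z$.

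The only step needing care is the domain bookkeeping. Finiteness of $M_{R_Z}(t)$ is automatic, since $R_Z\le0$, so no integrability of $Z$ in the positive direction is needed. The case $c=0$ (where $1/c=\infty$) is also covered by the same convex-combination bound. I also have to check that \eqref{eq:boundary-bounds} is valid for $X+Z$ even if $\tplus(X+Z)$ happens to be $0$; in that case the bound reads $\cplus(X+Z)\ge\infty$, which contradicts finiteness. So $\tplus(X+Z)>0$ is forced, consistent with \eqref{eq:finite-iff-mgf}, and the chain of inequalities remains valid.
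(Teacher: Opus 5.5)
Your proof is correct. It has the same overall structure as the paper's: the mixture identity gives the upper bound, and the MGF-boundary lower bound gives equality. Two steps are carried out differently.

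For part 1, you show that $c=\cplus(X)$ stays feasible for $X+Z$ because $M_{R_Z}(t)\le 1\le(1-ct)^{-1}$. This is exactly the proof of \Cref{prop:convolution} with $\cplus(Z)=0$ supplied by \Cref{thm:zero-pattern}, so your route shows that part 1 is a direct corollary of the standard maximum bound. The paper instead compares the objectives pointwise. It uses that the mixture lies between $M_{R_X}(t)$ and $M_{R_Z}(t)\le 1$ and that $m\mapsto(1-m^{-1})/t$ is increasing, which gives $\Psi_{+,X+Z}(t)\le\max\{0,\Psi_{+,X}(t)\}$ for every $t>0$. That is slightly more information, since it dominates the objective functions and not only their suprema, but the stated inequality does not need it.

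For the boundary statement, the paper proves $\tplus(X+Z)=\tplus(X)$ by arguing that a variable with no positive jumps has finite positive exponential moments of every order. You observe that the chain of inequalities uses only $\tplus(X+Z)\le\tplus(X)$. You obtain this from $K_Z\ge 0$, or equivalently from $M_{R_{X+Z}}\ge w\,M_{R_X}$, and it holds for any independent $Z$ in the standing class. Your argument is therefore more economical and shows that the sign hypothesis on $Z$ enters only through the upper bound. The paper's version additionally records the true fact that the positive MGF abscissa is unchanged.

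Your handling of the edge case $\tplus(X+Z)=0$, which is excluded by finiteness of the upper bound, is also sound.
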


\begin{proof}
If $Z$ has no positive jumps, then $R_Z\leq0$ almost surely by
\Cref{thm:zero-pattern}, so $M_{R_Z}(t)\leq1$ for $t>0$. By
\eqref{eq:remainder-mixture}, $M_{R_{X+Z}}(t)$ lies between
$M_{R_X}(t)$ and $M_{R_Z}(t)$. If $M_{R_X}(t)\geq1$, then
$M_{R_{X+Z}}(t)\leq M_{R_X}(t)$ and the map
$m\mapsto(1-m^{-1})/t$ is increasing. If $M_{R_X}(t)<1$, then both mixture
components are at most one and the objective for the sum is nonpositive. In
either case,
\begin{equation*}
 \Psi_{+,X+Z}(t)\leq\max\{0,\Psi_{+,X}(t)\}.
\end{equation*}
Taking suprema proves \eqref{eq:right-monotonicity}. Part 2 follows by
reflection.

A variable with no positive jumps has finite positive exponential moments of
every order. Indeed, in \eqref{eq:intro-lk} its positive-argument integral is
finite because finite variance implies a finite first moment for the large
negative jumps. Hence
$\tplus(X+Z)=\tplus(X)$. The boundary lower bound
\eqref{eq:boundary-bounds} gives
\begin{equation*}
 \cplus(X+Z)\geq1/\tplus(X+Z)=1/\tplus(X)=\cplus(X).
\end{equation*}
Together with \eqref{eq:right-monotonicity}, this proves
\eqref{eq:boundary-robustness}.
\end{proof}

\begin{remark}
The theorem concerns the variance-exact scale, not an envelope with a fixed
external variance proxy. Adding $Z$ changes the true variance from $V_X$ to
$V_X+V_Z$. The conclusion survives because the normalized remainder law changes
by the same variance weights.
\end{remark}

%% file: sections/05_exact_families.tex
\section{Equality laws and bilateral Gamma variables}
\label{sec:families}

The one-sided equality law also appears in the related manuscript under review
\citep{chen2026spectrally}. We include the proof here to keep the paper
self-contained. It also shows that signed jumps create no additional
extremizers.

\begin{proposition}[Equality in the sub-gamma envelope]
\label{prop:equality-laws}
Suppose that, for some finite $c\geq0$, equality holds in
\eqref{eq:subgamma} on a nonempty open interval. If $c=0$, then
$X\sim N(0,V)$. If $c>0$, then $X$ is a centered compound Poisson variable with
L\'evy density
\begin{equation}
 \nu(\dd x)=\frac{V}{2c^3}e^{-x/c}\one_{(0,\infty)}(x)\,\dd x.
 \label{eq:equality-levy}
\end{equation}
Equivalently, its jump intensity is $V/(2c^2)$ and its jumps are exponential
with mean $c$. The left equality laws are the reflections of these variables.
\end{proposition}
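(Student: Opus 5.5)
By \Cref{thm:factorization}, equality in \eqref{eq:subgamma} with $v=V$ on an open interval $I$ means
\[
M_{R_X}(t)=\frac{1}{1-ct}\qquad\text{for } t\in I .
\]
The proposal should note why the variance factor is $V$. If the statement allows a general $v$, matching the quadratic term at the origin forces $v=V$ once analyticity has been established. I will treat the envelope with $v=V$, which is the setting of the paper.

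\textbf{Analytic continuation and identification of $\Law(R_X)$.} The interval $I\subset(0,1/c)$ lies in the domain where the MGF of $R_X$ is finite. The MGF of a real random variable is analytic in the interior of its domain of finiteness. The right side $(1-ct)^{-1}$ is analytic on $(-\infty,1/c)$.

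The step I expect to be the main obstacle is pushing the identity down to a neighborhood of zero. My plan is a standard argument on the maximal strip. Let $(\alpha,\beta)$ be the interior of the domain of $M_{R_X}$, with $\alpha\le 0$. The two analytic functions agree on $I$, so by the identity theorem they agree on the connected component $(\alpha,\beta)\cap(-\infty,1/c)$, which contains $I$. If $\alpha>-\infty$ while the right side stays bounded as $t\downarrow\alpha$, this contradicts the fact that an MGF with a finite left abscissa of analyticity has a singularity at that abscissa, by Pringsheim's theorem applied to the two-sided Laplace transform. Hence the identity holds on $(-\infty,1/c)$ when $c>0$, and on all of $\R$ when $c=0$. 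In particular the two functions agree near $0$.

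Since an MGF finite on a neighborhood of zero determines the law, we get $R_X\equiv 0$ if $c=0$. If $c>0$, then $R_X\sim\mathrm{Exp}$ with mean $c$.

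\textbf{Reading off the triplet from \Cref{thm:factorization}.} If $c=0$, then $R_X=0$ almost surely. The atom at zero has mass $a/V=1$, so $\nu=0$ and $X\sim N(0,V)$.

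If $c>0$, the law of $R_X$ has no atom, so $a=0$. It also has no mass on $(-\infty,0)$, so by \eqref{eq:q-negative} $\nu((-\infty,0))=0$. On $(0,\infty)$ the density is
\[
q_X(r)=c^{-1}e^{-r/c}.
\]
The recovery formula \eqref{eq:recovery} then gives
\[
\nu(\dd x)=\frac{V}{2}\,q_X''(x)\,\dd x=\frac{V}{2c^{3}}\,e^{-x/c}\,\dd x
\]
on $(0,\infty)$, which is \eqref{eq:equality-levy}.

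Its total mass is
\[
\frac{V}{2c^3}\cdot c=\frac{V}{2c^2},
\]
and the normalized jump law is exponential with mean $c$. As a check, $\int x^2\nu(\dd x)=\frac{V}{2c^3}\cdot 2c^3=V$, consistent with $a=0$.

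\textbf{Left equality laws.} The left equality laws follow by applying the result to $-X$, using \eqref{eq:def-cminus}.
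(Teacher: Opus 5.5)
Your proof is correct, but at its one delicate step it takes a different route from the paper. Both arguments must allow for $0$ lying on the boundary of the finiteness domain of $M_{R_X}$, so equality on $I$ cannot be fed directly to MGF uniqueness at the origin.

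The paper sidesteps this by exponential tilting. For $t_0$ in the equality interval, the tilted law $e^{t_0r}\,\Pp(R_X\in\dd r)/M_{R_X}(t_0)$ has transform $M_{R_X}(t_0+s)/M_{R_X}(t_0)$. The correspondingly tilted exponential law has transform $(1-ct_0)/(1-c(t_0+s))$. These agree for $s$ near zero, so uniqueness identifies the tilted laws, and untilting identifies $\Law(R_X)$.

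You instead continue the identity leftward and exclude a finite left endpoint $\alpha$ via the Pringsheim--Landau theorem for bilateral Laplace transforms of positive measures. That works, but you misstate the mechanism. Boundedness of the right side as $t\downarrow\alpha$ contradicts nothing, since an MGF can remain bounded near, and even finite at, the edge of its domain. The actual contradiction is that $(1-cz)^{-1}$ is holomorphic in a complex neighbourhood of $\alpha\le 0<1/c$. By the identity theorem on the strip $\alpha<\operatorname{Re}z<\min\{\beta,1/c\}$, it would continue $M_{R_X}$ analytically across the very point where the theorem forces a singularity. Your claim that the identity holds on all of $(-\infty,1/c)$ would also need the same argument at $\beta$, but only a neighbourhood of zero is used, so this is harmless.

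Your route gives the classical analytic-continuation picture, at the cost of importing a complex-analytic boundary theorem. Tilting needs only MGF uniqueness, and because it can be done at any point of $I$, it even makes the paper's preliminary continuation within the positive domain unnecessary.

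The remaining steps coincide with the paper's: reading $a$, $\nu$, and the compound Poisson description off \Cref{thm:factorization}. The converse computation the paper appends is not part of the stated proposition.
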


\begin{proof}
On the interior of its finite MGF domain, $K_X$ is real analytic. Equality on
an open interval therefore extends to the connected positive domain. By
\eqref{eq:factorization},
\begin{equation}
M_{R_X}(t)=\frac{1}{1-ct}
 \label{eq:equality-remainder-mgf}
\end{equation}
for all sufficiently small $t>0$. Choose an interior point of this interval
and exponentially tilt both sides there. The tilted laws have moment
generating functions in a neighborhood of zero, so uniqueness identifies
$R_X$ with the degenerate law at zero when $c=0$, and with an exponential law
of mean $c$ when $c>0$. In the first case, recovery in
\Cref{thm:factorization} yields $\nu=0$ and $a=V$, so $X$ is Gaussian.

If $c>0$, $R_X$ therefore has density $q_X(r)=c^{-1}e^{-r/c}$ on the positive
half-line, no negative part, and no atom at zero. The recovery formula
\eqref{eq:recovery} gives \eqref{eq:equality-levy}. Its total mass is
$V/(2c^2)$, and normalizing it gives the exponential jump law. Conversely, a
centered compound Poisson variable with this L\'evy measure has
\begin{equation*}
 K_X(t)=\frac{V}{2c^2}\left(\frac{1}{1-ct}-1-ct\right)
 =\frac{Vt^2}{2(1-ct)},
\end{equation*}
so the characterization is exact.
\end{proof}

A useful two-sided exact family is bilateral Gamma. We use the
shape-scale convention. Let $G_+$ and $G_-$ be independent with
\begin{equation*}
 G_+\sim\operatorname{Gamma}(\alpha_+,\beta_+),
 \qquad
 G_-\sim\operatorname{Gamma}(\alpha_-,\beta_-),
\end{equation*}
where every parameter is positive, and define the centered difference
\begin{equation*}
 X=(G_+-\alpha_+\beta_+)-(G_--\alpha_-\beta_-).
\end{equation*}
This is the
bilateral Gamma family of \citet{kuchler2008bilateral}; see also
\citet{barman2025covariance} for its position among infinitely divisible and
variance-gamma models.

\begin{theorem}[Bilateral Gamma scales]
\label{thm:bilateral-gamma}
For the centered bilateral Gamma variable above,
\begin{equation}
 \cplus(X)=\beta_+,
 \qquad
 \cminus(X)=\beta_-.
 \label{eq:bilateral-gamma-scales}
\end{equation}
Both values are controlled by the corresponding MGF boundary.
\end{theorem}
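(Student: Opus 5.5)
The natural route is to decompose $X$ as a sum and apply \Cref{thm:opposite-jumps} together with the boundary bound \eqref{eq:boundary-bounds}. Write $X = X_+ + Z$, where $X_+ = G_+-\alpha_+\beta_+$ and $Z = -(G_--\alpha_-\beta_-)$. These are independent, centered, infinitely divisible, and have positive finite variances $\alpha_+\beta_+^2$ and $\alpha_-\beta_-^2$, so both lie in the standing class.

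\textbf{Step 1: the spectrally positive piece.} The L\'evy measure of $X_+$ is $\alpha_+x^{-1}e^{-x/\beta_+}\,\dd x$ on $(0,\infty)$. Its cumulant function is
\[
K_{X_+}(t)=-\alpha_+\log(1-\beta_+t)-\alpha_+\beta_+t,
\]
so $\tplus(X_+)=1/\beta_+$. I will show $\cplus(X_+)=\beta_+$ by proving that $c=\beta_+$ is feasible; the matching lower bound $\cplus\ge 1/\tplus=\beta_+$ is \eqref{eq:boundary-bounds}. For feasibility I would compute the remainder directly. By \eqref{eq:q-positive},
\[
q(r)=\frac{2}{V}\int_r^\infty (x-r)\alpha_+x^{-1}e^{-x/\beta_+}\,\dd x .
\]
Alternatively, and more cleanly, I would verify the envelope directly. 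Set $u=\beta_+t\in[0,1)$. The claim
\[
-\log(1-u)-u\le \frac{u^2}{2(1-u)}
\]
is equivalent to $f(u)=u^2/(2(1-u))+u+\log(1-u)\ge 0$. Here $f(0)=0$, and
\[
f'(u)=\frac{u(2-u)}{2(1-u)^2}+1-\frac{1}{1-u}=\frac{u^2}{2(1-u)^2}\ge0,
\]
using $\frac{u(2-u)}{2(1-u)^2}-\frac{u}{1-u}=\frac{u(2-u)-2u(1-u)}{2(1-u)^2}$. Since $V_+=\alpha_+\beta_+^2$, multiplying through by $\alpha_+$ gives exactly $K_{X_+}(t)\le V_+t^2/(2(1-\beta_+t))$. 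Hence $\cplus(X_+)=\beta_+=1/\tplus(X_+)$, with $0<\tplus(X_+)<\infty$.

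\textbf{Step 2: adding opposite jumps.} $Z$ has L\'evy measure supported on $(-\infty,0)$, so the boundary clause of \Cref{thm:opposite-jumps} applies. It gives $\cplus(X)=\cplus(X_++Z)=\cplus(X_+)=\beta_+$. Moreover, the proof of that theorem shows $\tplus(X)=\tplus(X_+)=1/\beta_+$, so the right scale is controlled by the MGF boundary.

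\textbf{Step 3: reflection.} $-X$ is the bilateral Gamma variable with the roles of $(\alpha_+,\beta_+)$ and $(\alpha_-,\beta_-)$ swapped. Steps 1--2 together with \eqref{eq:reflection-scales} then give $\cminus(X)=\cplus(-X)=\beta_-$, again at the boundary $\tminus(X)=1/\beta_-$.

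The only genuine computation is the elementary inequality in Step 1; everything else is bookkeeping with earlier results. The main point to check is that $f'\ge0$ holds on all of $[0,1)$, which the displayed simplification confirms. If one prefers the remainder route instead, one would need $M_{R_{X_+}}(t)\le(1-\beta_+t)^{-1}$, which is the same inequality rewritten via \eqref{eq:factorization}.
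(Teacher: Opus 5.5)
Your proposal is correct and follows essentially the same route as the paper: show $\beta_+$ is feasible for the centered positive Gamma piece and optimal by the boundary bound, handle $-G_-$ with \Cref{thm:opposite-jumps} (no positive jumps, so $\tplus$ is unchanged), then reflect. The differences are cosmetic. You check $-\log(1-u)-u\le u^2/(2(1-u))$ by differentiating, where the paper compares power-series coefficients. You also cite the boundary-robustness clause directly, where the paper combines part 1 with \eqref{eq:boundary-bounds}, which is exactly how that clause is proved.
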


\begin{proof}
The centered positive Gamma component has cumulant generating function
\begin{equation*}
 K_+(t)=\alpha_+\{-\log(1-\beta_+t)-\beta_+t\},
 \qquad t<1/\beta_+.
\end{equation*}
For $0\leq x<1$,
\begin{equation}
 -\log(1-x)-x
 =\sum_{n\geq2}\frac{x^n}{n}
 \leq\frac{x^2}{2(1-x)}.
 \label{eq:gamma-series-bound}
\end{equation}
Thus the centered positive component is right sub-gamma with its true variance
$\alpha_+\beta_+^2$ and scale $\beta_+$. Its positive MGF boundary is
$1/\beta_+$, so \eqref{eq:boundary-bounds} shows that this scale is optimal.

The centered variable $-G_-$ has no positive L\'evy jumps and therefore has
right scale zero. \Cref{thm:opposite-jumps} gives
$\cplus(X)\leq\beta_+$. The positive MGF boundary of $X$ remains
$1/\beta_+$, since the negative Gamma component has all positive exponential
moments. The boundary lower bound gives the reverse inequality. Reflection
proves the formula for $\cminus$.
\end{proof}

In the symmetric case $\alpha_+=\alpha_-$ and $\beta_+=\beta_-$, the third
cumulant vanishes while both scales are positive. More generally, cancellation
can make $\kappa_3=0$ even when the two scales differ. This shows why the local
third-cumulant obstruction in \eqref{eq:local-bounds} cannot identify the
directional pair for two-sided laws.

%% file: sections/06_skellam.tex
\section{The Skellam phase transition}
\label{sec:skellam}

Let $\lambda>0$ and $p\in[0,1]$. Take independent Poisson variables
$N_+\sim\operatorname{Poisson}(\lambda p)$ and
$N_-\sim\operatorname{Poisson}(\lambda(1-p))$, and set
\begin{equation}
 X_{\lambda,p}=N_+-N_- -\lambda(2p-1).
 \label{eq:centered-skellam}
\end{equation}
This is the centered Skellam law \citep{skellam1946frequency}. Its L\'evy
measure and variance are
\begin{equation}
 \nu=\lambda\{p\delta_1+(1-p)\delta_{-1}\},
 \qquad V=\lambda.
 \label{eq:skellam-levy}
\end{equation}
The optimal scales do not depend on $\lambda$ by L\'evy-time invariance.

Put $d=2p-1$. The canonical variable $Y$ takes the values $1$ and $-1$ with
probabilities $p$ and $1-p$. By \eqref{eq:beta-identity}, the right remainder
MGF is
\begin{equation}
 M_p(t)=\frac{2}{t^2}\{\cosh t-1+d(\sinh t-t)\}.
 \label{eq:skellam-M}
\end{equation}
Its series is
\begin{equation}
 M_p(t)=\sum_{n=0}^{\infty}a_n(d)t^n,
 \qquad
 a_n(d)=
 \begin{cases}
  2/(n+2)!,&n\ \text{even},\\
  2d/(n+2)!,&n\ \text{odd}.
 \end{cases}
 \label{eq:skellam-coefficients}
\end{equation}
In particular,
\begin{equation}
 M_p(t)=1+\frac{d}{3}t+\frac{1}{12}t^2
       +\frac{d}{60}t^3+\frac{1}{360}t^4+O(t^5).
 \label{eq:skellam-M-series}
\end{equation}
For
\begin{equation}
 \Psi_p(t)=\frac{1-M_p(t)^{-1}}{t},
 \label{eq:skellam-objective}
\end{equation}
we therefore have
\begin{equation}
 \Psi_p(t)=\frac{d}{3}
 +\left(\frac{1}{12}-\frac{d^2}{9}\right)t+O(t^2).
 \label{eq:skellam-Psi-series}
\end{equation}
The sign change in the linear term gives the candidate thresholds. The next
lemma proves that the candidate is globally correct.

\begin{lemma}[Coefficientwise local regime]
\label{lem:skellam-coefficients}
If $d\geq\sqrt{3}/2$, then
\begin{equation}
 M_p(t)\leq\frac{1}{1-(d/3)t},
 \qquad 0\leq t<3/d.
 \label{eq:skellam-coefficient-envelope}
\end{equation}
\end{lemma}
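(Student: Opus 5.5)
The plan is to compare the two sides of \eqref{eq:skellam-coefficient-envelope} coefficient by coefficient. For $0\leq t<3/d$ the right side has the convergent expansion
\begin{equation*}
 \frac{1}{1-(d/3)t}=\sum_{n=0}^{\infty}\left(\frac{d}{3}\right)^n t^n .
\end{equation*}
The left side is the everywhere-convergent series \eqref{eq:skellam-coefficients}. Since $d\geq\sqrt3/2>0$, every $a_n(d)$ is nonnegative and $t\geq0$. So it suffices to show
\begin{equation*}
 a_n(d)\leq (d/3)^n\qquad\text{for every } n\geq 0,
\end{equation*}
and then sum termwise. Note that $d=2p-1\leq1$ throughout, so the range $\sqrt3/2\leq d\leq1$ is nonempty.

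\textbf{Low-order coefficients.}
\begin{itemize}
\item At $n=0$ both coefficients equal $1$.
\item At $n=1$ both equal $d/3$; this is the local matching behind \eqref{eq:skellam-Psi-series}.
\item At $n=2$ the requirement is $1/12\leq d^2/9$, i.e.\ $d^2\geq 3/4$. This is exactly the hypothesis $d\geq\sqrt3/2$, so the $t^2$ coefficient is where the threshold $p_+$ enters.
\end{itemize}

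\textbf{Even indices.} For even $n$ we need $2/(n+2)!\leq (d/3)^n$. By monotonicity in $d$ it suffices to take $d=\sqrt3/2$, where $(d/3)^n=12^{-n/2}$. The claim becomes
\begin{equation*}
 2\cdot 12^{n/2}\leq (n+2)!\,.
\end{equation*}
This holds with equality at $n=2$ ($24=24$). Passing from $n$ to $n+2$ multiplies the left side by $12$ and the right side by $(n+3)(n+4)\geq 30$. Induction therefore gives the bound for all even $n\geq2$.

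\textbf{Odd indices.} For odd $n$ we need $2d/(n+2)!\leq (d/3)^n$, i.e.
\begin{equation*}
 2\cdot 3^n\leq d^{\,n-1}(n+2)!\,.
\end{equation*}
Since $d^{n-1}\geq (3/4)^{(n-1)/2}$, it suffices to show
\begin{equation*}
 2\cdot 3^n (4/3)^{(n-1)/2}\leq (n+2)!\,.
\end{equation*}
This is an equality at $n=1$ ($6=6$) and holds at $n=3$ ($72\leq120$). Passing from $n$ to $n+2$ multiplies the left side by $9\cdot\frac43=12$ and the right side by $(n+3)(n+4)\geq 30$. Induction gives the bound for all odd $n$.

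\textbf{Main obstacle.} The only delicate point is choosing the uniform reduction to $d=\sqrt3/2$ so that the two inductions close with a constant ratio of $12$ against factorial growth. The binding cases are $n=1$, where equality holds, and $n=2$, which reproduces the threshold. Summing the coefficientwise inequalities over $n$ then yields \eqref{eq:skellam-coefficient-envelope} on $0\leq t<3/d$.
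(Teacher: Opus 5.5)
Your proof is correct and follows essentially the same route as the paper. Both compare the series of $M_p$ with the geometric series term by term, note equality at $n=0,1$ and the threshold $d^2\geq 3/4$ at $n=2$, and then propagate within each parity class using the growth factor $(n+3)(n+4)$ against $(d/3)^{-2}\leq 12$. Your reduction to the extremal value $d=\sqrt3/2$ followed by explicit inductions is only a reformulation of the paper's direct ratio comparison at general $d$.
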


\begin{proof}
The coefficient of $t^n$ on the right of
\eqref{eq:skellam-coefficient-envelope} is $(d/3)^n$. For $n=0$ and $n=1$,
the coefficients in \eqref{eq:skellam-coefficients} agree with the right side.
For $n=2$, the required inequality is
\begin{equation*}
 \frac{1}{12}\leq\frac{d^2}{9},
\end{equation*}
which is equivalent to $d^2\geq3/4$.

It remains to propagate the inequality. Within either parity class, the ratio
of two successive coefficients on the left, with indices differing by two, is
\begin{equation*}
 \frac{1}{(n+3)(n+4)}.
\end{equation*}
The corresponding ratio on the right is $(d/3)^2\geq1/12$. Starting from
$n=2$ for the even coefficients and from $n=1$ for the odd coefficients gives
the claim, because $1/((n+3)(n+4))\leq1/12$ for every $n\geq1$.
\end{proof}

Define
\begin{equation}
 p_- =\frac{2-\sqrt{3}}{4},
 \qquad
 p_+ =\frac{2+\sqrt{3}}{4}.
 \label{eq:skellam-thresholds}
\end{equation}

\begin{theorem}[Exact Skellam regimes]
\label{thm:skellam-phase}
For the centered Skellam law \eqref{eq:centered-skellam}, the right scale has
the following regimes:
\begin{equation}
 \cplus(X_{\lambda,p})=
 \begin{cases}
  0,&p=0,\\
  \displaystyle\max_{t>0}\Psi_p(t),&0<p<p_+,\\
  \displaystyle\frac{2p-1}{3},&p_+\leq p\leq1.
 \end{cases}
 \label{eq:skellam-cplus}
\end{equation}
For $0<p<p_+$, the maximum is attained at a positive finite argument and is
strictly larger than $\max\{0,(2p-1)/3\}$. No uniqueness is asserted. The left
scale is
\begin{equation}
 \cminus(X_{\lambda,p})=
 \begin{cases}
  \displaystyle\frac{1-2p}{3},&0\leq p\leq p_-,\\
  \displaystyle\max_{t>0}\Psi_{1-p}(t),&p_-<p<1,\\
  0,&p=1,
 \end{cases}
 \label{eq:skellam-cminus}
\end{equation}
and the middle regime is again interior controlled.
\end{theorem}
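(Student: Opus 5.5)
The plan is to prove the right-scale formula \eqref{eq:skellam-cplus} directly and then obtain the left scale by reflection. Reflection sends $X_{\lambda,p}$ to a centered Skellam variable with parameter $1-p$, so \Cref{prop:basic-operations} gives $\cminus(X_{\lambda,p})=\cplus(X_{\lambda,1-p})$. Under $p\mapsto1-p$, the conditions $p_+\leq 1-p$ and $1-p<p_+$ become $p\leq p_-$ and $p>p_-$, because $1-p_+=p_-$. By \Cref{thm:levy-time}-type invariance (L\'evy time in \Cref{prop:basic-operations}) we may also take $\lambda=1$ throughout.

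\textbf{Endpoint cases.} For $p=0$ there are no positive jumps, so $\cplus=0$ by \Cref{thm:zero-pattern}.

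\textbf{Local regime, $p_+\leq p\leq 1$.} Here $d=2p-1\geq\sqrt3/2$, and this range of $d$ is exactly $p\geq p_+$. \Cref{lem:skellam-coefficients} gives $M_p(t)\leq(1-(d/3)t)^{-1}$ on $[0,3/d)$, which is the envelope \eqref{eq:M-exponential-envelope} with $c=d/3$. Hence $d/3$ is a feasible scale. The local lower bound \eqref{eq:local-bounds}, with $\kappa_3/(3V)=d/3$, gives the reverse inequality, so $\cplus=d/3$.

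\textbf{Interior regime, $0<p<p_+$.} I would argue directly rather than through \Cref{cor:interior}, because the Skellam L\'evy measure fails to charge both half-lines at $p=1$. Since $p>0$, \Cref{thm:zero-pattern} gives $\cplus>0$, so $\sup_{t>0}\Psi_p(t)>0$. The MGF is entire, and $M_p(t)\to\infty$, so $\Psi_p(t)\to0$ as $t\to\infty$.

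At the origin, \eqref{eq:skellam-Psi-series} gives $\Psi_p(0+)=d/3$. The slope is $1/12-d^2/9$, and it is strictly positive exactly when $|d|<\sqrt3/2$, that is, when $p_-<p<p_+$. In that range $\Psi_p$ strictly exceeds its limit $d/3$ near zero. Together with decay at infinity and continuity, the positive supremum is attained at a positive finite point and strictly exceeds $\max\{0,d/3\}$.

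The remaining range is $0<p\leq p_-$, where $d\leq-\sqrt3/2$ and the slope at zero is nonpositive. Here $d/3<0$ while the supremum is positive. Since $\Psi_p$ is continuous on $(0,\infty)$, tends to $d/3<0$ at $0+$ and to $0$ at infinity, a positive value forces attainment at an interior point. The strict inequality against $\max\{0,d/3\}=0$ is then immediate.

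\textbf{Main obstacle.} The delicate point is the inclusive threshold $p=p_+$, where the linear coefficient in $\Psi_p$ vanishes. I expect the coefficientwise lemma to handle this boundary case, since it only needs $d^2\geq3/4$. The interior argument is only needed for strict $d<\sqrt3/2$, where the positive slope does the work, so the boundary case should not require a separate analysis.
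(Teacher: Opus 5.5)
Your proposal is correct and follows the paper's proof essentially step for step. It uses the zero pattern at $p=0$; the coefficientwise lemma plus the local lower bound for $p\geq p_+$; positivity of the scale, the small-$t$ expansion, and the decay $\Psi_p(t)\to0$ at infinity to get interior attainment for $0<p<p_+$; and reflection with $1-p_+=p_-$ for the left scale. The only differences are cosmetic: you split the interior regime by the sign of the slope $1/12-d^2/9$ rather than by the sign of $d$, and your stated reason for avoiding the interior-control corollary does not apply, since $p=1$ lies in the local regime, not the interior one.
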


\begin{proof}
At $p=0$ there are no positive jumps, so $\cplus=0$ by
\Cref{thm:zero-pattern}. Suppose first that $p\geq p_+$. Then
$d\geq\sqrt{3}/2$, and \Cref{lem:skellam-coefficients} shows that the scale
$d/3$ is feasible. The local lower bound \eqref{eq:local-bounds}, or the limit
in \eqref{eq:skellam-Psi-series}, gives the reverse inequality. Hence
$\cplus=d/3$.

Now let $0<p<p_+$. Positive jumps are present, so $\cplus>0$. If $d\leq0$, the
limit of $\Psi_p(t)$ at zero is nonpositive and cannot control this positive
scale. If $0<d<\sqrt{3}/2$, the coefficient of $t$ in
\eqref{eq:skellam-Psi-series} is positive, so $\Psi_p(t)>d/3$ for all
sufficiently small positive $t$. Again the origin cannot control the scale.

The function $\Psi_p$ is continuous on $(0,\infty)$. Since $p>0$, the positive
exponential term in \eqref{eq:skellam-M} gives $M_p(t)\to\infty$, and hence
$\Psi_p(t)\to0$ as $t\to\infty$. Its positive supremum is therefore attained
away from both endpoints. This proves \eqref{eq:skellam-cplus}. Reflection
replaces $p$ by $1-p$, and $1-p_+=p_-$, which gives
\eqref{eq:skellam-cminus}.
\end{proof}

The phase theorem differs from the optimal sub-Poisson result of
\citet{leskela2026subpoisson}. Their variance proxy for the Skellam law is
$\lambda$ in both directions. Here the variance is already fixed at $\lambda$;
the optimized object is the sub-gamma scale, which detects the imbalance
parameter $p$ and changes its maximizing mechanism at $p_-$ and $p_+$.

For reference, \Cref{tab:skellam-numerics} gives numerical values obtained by
maximizing the explicit function \eqref{eq:skellam-objective}. The table is an
illustration, not part of the proof.

\begin{table}[ht]
\centering
\caption{Selected Skellam scales and right maximizing arguments.}
\label{tab:skellam-numerics}
\begin{tabular}{@{}ccc@{}}
\toprule
$p$ & $\cplus$ & right maximizing argument \\
\midrule
$0.001$ & $0.0656744$ & $14.0476$ \\
$0.010$ & $0.0818736$ & $10.9683$ \\
$0.250$ & $0.1382405$ & $5.6387$ \\
$0.500$ & $0.1742180$ & $3.8224$ \\
$0.750$ & $0.2204199$ & $2.0986$ \\
$p_+$ & $0.2886751$ & $0$ (local) \\
$1.000$ & $0.3333333$ & $0$ (local) \\
\bottomrule
\end{tabular}
\end{table}

%% file: sections/07_rare_jumps.tex
\section{Rare adverse-jump asymptotics}
\label{sec:rare}

The zero-pattern theorem is discontinuous at $p=0$: the right scale is zero
there and positive for every $p>0$. The next result describes how the positive
values approach zero. Let $t_p$ be any global maximizer of $\Psi_p$ in the
interior regime, and write
\begin{equation}
 L=\log(1/p).
 \label{eq:L-definition}
\end{equation}

\begin{theorem}[Rare positive jumps]
\label{thm:rare-jumps}
As $p\downarrow0$,
\begin{equation}
 t_p=L+3\log L-\log2+o(1),
 \label{eq:tp-asymptotic}
\end{equation}
and
\begin{equation}
 \cplus(X_{\lambda,p})\sim\frac{1}{L}.
 \label{eq:cp-asymptotic}
\end{equation}
The conclusion holds for every choice of a global interior maximizer; it does
not require or imply uniqueness.
\end{theorem}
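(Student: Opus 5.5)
Write $d=2p-1=-1+2p$. The plan is to turn \eqref{eq:skellam-M} into an explicit form adapted to $p\downarrow0$, locate the maximizer by a heuristic balance, and then make the heuristic rigorous in three regions of $t$. Since $\cosh t-\sinh t=e^{-t}$, formula \eqref{eq:skellam-M} becomes exactly
\begin{equation*}
 M_p(t)=M_0(t)+\frac{4p}{t^2}(\sinh t-t),
 \qquad
 M_0(t)=\frac{2(e^{-t}-1+t)}{t^2}.
\end{equation*}
Here $M_0$ is the remainder MGF of the reflected Poisson law. It satisfies $M_0(t)<1$ for $t>0$ and $M_0(t)=2/t+O(1/t^2)$ as $t\to\infty$. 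Put $u(t)=2pe^t/t^2$, so that $M_p(t)=u(t)(1+O(e^{-2t}))+2/t+O(1/t^2)$ for large $t$. Writing $\Psi_p(t)=1/t-1/(tM_p(t))$, we see that $\Psi_p$ is positive only once $M_p>1$, and there it is at most $1/t$.

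Heuristically, differentiating $1/t-1/(tu)$ and using $u'/u\approx1$ gives the balance $u(t)\approx t$. That is $e^t=t^3/(2p)$, whose solution is $t^\ast=L+3\log L-\log2+o(1)$, and at this point $\Psi_p(t^\ast)=1/t^\ast-1/t^{\ast2}+O(\log L/L^3)$. The rigorous argument then runs in three steps.

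\textbf{Lower bound.} Evaluate $\Psi_p$ at $t^\ast$ to get $\cplus\geq 1/t^\ast-(1+o(1))/t^{\ast2}$. In particular $\cplus\geq(1+o(1))/L$.

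\textbf{Localization.} First, for $t\leq t^\ast-K$ with $K$ large, show $M_p(t)\leq1+o(1)$ or $\Psi_p$ small. For $0<t\leq L/2$, the bound $\sinh t-t\leq e^t/2$ gives $4p(\sinh t-t)/t^2\leq 2p^{1/2}/t^2$. Combining this with $1-M_0(t)\geq c\min\{t,1/t\}$ shows $M_p<1$, hence $\Psi_p<0$. The intermediate range $L/2\leq t\leq t^\ast-K$ is absorbed into the window analysis below, since the scaled objective there tends to $-\infty$. Second, for $t\geq t^\ast+K$, the trivial bound $\Psi_p(t)\leq1/t\leq 1/t^\ast-(K-o(1))/t^{\ast2}$ beats the lower bound once $K>1$. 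Hence every global maximizer satisfies $t_p=t^\ast+O(1)$; in particular $t_p\to\infty$.

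\textbf{Window analysis.} Write $t=t^\ast+s$ with $s$ ranging over a compact set, or more generally $s\geq -L/2+3\log L$. Then $u(t)=t^\ast e^{s}(1+o(1))$ uniformly, so $M_p(t)=t^\ast e^s(1+o(1))$. Using $1/t=1/t^\ast-s/t^{\ast2}+O(s^2/t^{\ast3})$, this gives
\begin{equation*}
 t^{\ast2}\bigl(\Psi_p(t^\ast+s)-1/t^\ast\bigr)=-s-e^{-s}+o(1)
\end{equation*}
uniformly on compacts, and this quantity tends to $-\infty$ uniformly as $s\to-\infty$ within the range. The limit $g(s)=-s-e^{-s}$ is strictly concave with unique maximizer $s=0$ and $g(0)=-1$. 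A standard argmax argument then gives $s_p=t_p-t^\ast\to0$, which is \eqref{eq:tp-asymptotic} for every global maximizer. It also gives $\cplus=1/t^\ast-(1+o(1))/t^{\ast2}\sim1/L$, which is \eqref{eq:cp-asymptotic}.

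The main obstacle is the uniformity of the $o(1)$ error in the window analysis: the approximation of $M_p$ by $u+2/t$ and its relative error must hold uniformly over the whole range $s\in[-L/2+3\log L,K]$, not just on compacts. I would handle this by bounding $M_0$ and $e^{-t}$ corrections explicitly, each being $O(1/t)$ relative to $u\geq t^\ast e^{s}$ there. Where $u$ becomes small, $M_p<1$ or $\Psi_p\leq 1/t-1/(tM_p)$ with $M_p=O(1)$ shows the scaled objective is below $g(0)-1$.
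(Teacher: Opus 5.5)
Your proposal is correct up to two routine repairs, but after the lower bound it takes a different route from the paper. The paper also evaluates $\Psi_p$ at the comparison point $L+3\log L-\log 2$, which is your $t^\ast$ up to $o(1)$. It then proves only the qualitative fact $t_p\to\infty$. The location comes from the first-order condition $t_pM_p'(t_p)=M_p(t_p)\{M_p(t_p)-1\}$ at the interior maximizer. With $q_p=2pe^{t_p}/t_p^2$ and the exact decomposition $M_p(t)=2pe^t/t^2+2(1-2p)/t-2/t^2+2(1-p)e^{-t}/t^2$, this forces $q_p=t_p-1+o(1)$. Taking logarithms then gives the asymptotic for $t_p$.

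You never differentiate. Instead you prove the quantitative localization $t_p=t^\ast+O(1)$ and run an argmax argument for the rescaled objective $t^{\ast2}\bigl(\Psi_p(t^\ast+s)-1/t^\ast\bigr)\to -s-e^{-s}$. This costs more work to the left of $t^\ast$; the paper needs only $t_p\to\infty$, because the stationary equation does the localizing. Your right-side exclusion, comparing $\Psi_p\le 1/t$ with the second-order lower bound, is neat. Your route also delivers directly the sharper value $\cplus=1/t^\ast-(1+o(1))/t^{\ast2}$, and its limit profile explains why the optimum sits where $2pe^t/t^2\approx t$.

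Two left-side estimates need repair, though neither needs a new idea.

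\textbf{Near zero.} On $(0,L/2]$ your bound $4p(\sinh t-t)/t^2\le 2p^{1/2}/t^2$ blows up as $t\downarrow0$. It cannot beat $1-M_0(t)\approx t/3$ when $t\lesssim p^{1/6}$.
\begin{itemize}
\item Use $\sinh t-t\le \frac{t^3}{6}\cosh t$ instead, which gives $4p(\sinh t-t)/t^2\le \frac{2}{3}p^{1/2}t$ on $(0,L/2]$.
\item Combined with $1-M_0(t)\ge \frac{1}{6}\min\{t,1\}$, this yields $M_p<1$ on the whole interval for small $p$.
\end{itemize}

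\textbf{In the window.} The identity $u(t^\ast+s)=t^\ast e^{s}(t^\ast/t)^2$ is exact. So $u=t^\ast e^s(1+o(1))$ is not uniform when $|s|$ is of order $L$: the factor tends to $4$ near $t=L/2$. Also, your stated range $s\ge -L/2+3\log L$ leaves $t\in[L/2,\,L/2+6\log L]$ uncovered. Both issues disappear with the following steps.
\begin{itemize}
\item $\Psi_p(t)>0$ forces $M_p(t)>1$. Since $M_p\le u+2/t$, this gives $u(t)>1-2/t\ge 1/2$, hence $s\ge -\log L-O(1)$.
\item On that range $t/t^\ast\to1$ uniformly and $M_p=u\,(1+O(1/t))$.
\item The exact identity $t^{\ast2}\bigl(\Psi_p(t)-1/t^\ast\bigr)=(t^\ast/t)\bigl(-s-t^\ast/M_p(t)\bigr)$ then gives the uniform bound $(1+o(1))\bigl(|s|-(1+o(1))e^{|s|}\bigr)$ for $s\le -K$.
\item Everywhere else in $[L/2,t^\ast-K]$ one has $\Psi_p\le 0$, so the rescaled objective is at most $-t^\ast$.
\end{itemize}
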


\begin{proof}
It is convenient to write
\begin{equation}
 A(t)=\frac{2(e^t-1-t)}{t^2},
 \qquad
 M_p(t)=pA(t)+(1-p)A(-t).
 \label{eq:A-and-M}
\end{equation}
Consider the deterministic comparison point
\begin{equation*}
 s_p=L+3\log L-\log2.
\end{equation*}
Since $s_p\sim L$,
\begin{equation*}
 \frac{2pe^{s_p}}{s_p^2}=\frac{L^3}{s_p^2}\sim L,
 \qquad
 (1-p)A(-s_p)\sim\frac{2}{s_p}.
\end{equation*}
It follows that $M_p(s_p)\sim L$ and
\begin{equation}
 \Psi_p(s_p)\sim\frac{1}{L}.
 \label{eq:rare-lower}
\end{equation}
In particular, the optimal value is at least of this order.

We next show that $t_p\to\infty$. On every compact subinterval of
$(0,\infty)$, $M_p(t)$ converges uniformly to $A(-t)<1$, so $\Psi_p(t)$ is
eventually negative there. The expansion at zero is also uniform for small
$p$ and starts at $(2p-1)/3$, which stays bounded away from zero on the
negative side. This excludes both a bounded positive subsequence and convergence
to zero, in view of the positive lower bound \eqref{eq:rare-lower}.

At an interior maximizer, differentiation of
\eqref{eq:skellam-objective} gives the stationary equation
\begin{equation}
 t_p M_p'(t_p)=M_p(t_p)\{M_p(t_p)-1\}.
 \label{eq:stationary}
\end{equation}
Set
\begin{equation}
 q_p=\frac{2pe^{t_p}}{t_p^2}.
 \label{eq:q-definition}
\end{equation}
Because the maximum is positive, $M_p(t_p)>1$. Since
$A(-t)=2/t+O(t^{-2})$, this implies $q_p$ is bounded away from zero. As
$t_p\to\infty$, the exact decomposition
\begin{equation*}
 M_p(t)=\frac{2pe^t}{t^2}+\frac{2(1-2p)}{t}-\frac{2}{t^2}
          +\frac{2(1-p)e^{-t}}{t^2}
\end{equation*}
and its derivative give
\begin{align}
 M_p(t_p)&=q_p+\frac{2}{t_p}+o(1/t_p),
 \label{eq:M-rare-expansion}\\
 t_pM_p'(t_p)&=q_p(t_p-2)-\frac{2}{t_p}+o(1/t_p).
 \label{eq:Mp-rare-expansion}
\end{align}
Substitution in \eqref{eq:stationary}, followed by division by $q_p$, yields
\begin{equation}
 q_p=t_p-1+o(1),
 \label{eq:q-over-t}
\end{equation}
and hence $q_p/t_p\to1$.

Taking logarithms in \eqref{eq:q-definition} and using
\eqref{eq:q-over-t},
\begin{align*}
 t_p
 &=L+2\log t_p+\log q_p-\log2\\
 &=L+3\log t_p-\log2+o(1).
\end{align*}
This relation first gives $t_p/L\to1$, and then
$\log t_p=\log L+o(1)$. Equation \eqref{eq:tp-asymptotic} follows. Finally,
\eqref{eq:M-rare-expansion} and \eqref{eq:q-over-t} show that
$M_p(t_p)\sim t_p$. Therefore
\begin{equation*}
 \cplus(X_{\lambda,p})
 =\frac{1-M_p(t_p)^{-1}}{t_p}
 \sim\frac{1}{t_p}\sim\frac{1}{L},
\end{equation*}
which proves \eqref{eq:cp-asymptotic}.
\end{proof}

The maximizer moves to order $\log(1/p)$. At that scale, the exponentially
amplified rare positive component and the $t^{-2}$ Beta-remainder factor
balance at order $t$.

%% file: sections/08_discussion.tex
\section{Discussion}
\label{sec:discussion}

For a general finite-variance infinitely divisible law, both optimal
true-variance sub-gamma scales are functions of the same signed canonical
remainder. Together with $V$, the full remainder law recovers the L\'evy
triplet, while the two-number summary $(\cplus,\cminus)$ identifies which jump
directions are present.
The two scales coincide only when the relevant MGFs happen to impose the same
optimization, not because the definition is intrinsically symmetric.

Adding nonpositive jumps can only reduce the optimal right scale, even though it
increases the true variance. If the positive MGF boundary already fixes the
right pole, that reduction is exactly offset and the scale is unchanged. The
bilateral Gamma calculation is a clean instance: the negative Gamma component
does not alter the right boundary, and the positive component does not alter the
left boundary.

The Skellam family shows why local cumulants and MGF boundaries are not enough.
Its MGF is entire, so there is no finite boundary obstruction. The third
cumulant gives the local candidate $(2p-1)/3$, but this candidate is globally
valid only beyond $p_+$. Below that threshold, the optimizer moves into the
interior. Reflection creates the second threshold $p_-$. At $p=1/2$, the third
cumulant is zero although both optimal scales are positive and interior
controlled. It is an exact example in which the whole signed MGF, rather than a
finite cumulant list, determines the scales.

We do not assert uniqueness of an interior optimizer. The signed remainder does
not supply a scalar stochastic order that
would replace the M-class criterion from the spectrally positive setting. The
results are one-dimensional; a multivariate extension would require a family of
directional canonical measures and cannot be summarized by two numbers. The
symbolic, numerical, and Lean files check selected algebraic and
optimization steps. They do not replace the analytic proofs or formalize the
measure-theoretic L\'evy representation.